\newtheorem{prop}{Proposition}[section]
\newtheorem{lem}[prop]{Lemma}
\newtheorem{cor}[prop]{Corollary}
\newtheorem{thm}[prop]{Theorem}
\newtheorem{conj}[prop]{Conjecture}
\theoremstyle{definition}
\newtheorem{rem}[prop]{Remark}
\newtheorem{defi}[prop]{Definition}
\newtheorem{ex}[prop]{Example}
\def\Z{\mathbb{Z}}
\numberwithin{equation}{section}
\def\S{\mathbb{S}}
\def\Z{\mathbb{Z}}
\def\N{\mathbb{N}}
\def\G{\Gamma}
\def\H{\mathrm{H}}
\def\N{\mathrm{N}}
\def\S{\mathcal{S}}
\def\E{\mathcal{E}}
\def\D{\mathcal{D}}
\def\B{\mathcal{B}}
\def\F{\mathcal{F}}
\begin{document}

\title{Non-zero sum Heffter arrays and their applications}

\author[S. Costa]{Simone Costa}
\address{DICATAM - Sez. Matematica, Universit\`a degli Studi di Brescia, Via
Branze 43, I-25123 Brescia, Italy}
\email{simone.costa@unibs.it}

\author[S. Della Fiore]{Stefano Della Fiore}
\address{DII, Universit\`a degli Studi di Brescia, Via
Branze 38, I-25123 Brescia, Italy}
\email{s.dellafiore001@unibs.it}

\author[A. Pasotti]{Anita Pasotti}
\address{DICATAM - Sez. Matematica, Universit\`a degli Studi di Brescia, Via
Branze 43, I-25123 Brescia, Italy}
\email{anita.pasotti@unibs.it}

\begin{abstract}
In this paper we introduce a new class of partially filled arrays that, as Heffter arrays, are related to
difference families, graph decompositions and biembeddings.
A \emph{non-zero sum Heffter array} $\N\H(m,n; h,k)$ is an $m \times n$ p. f. array with entries in $\Z_{2nk+1}$ such that:
 each row contains $h$ filled cells and each column contains $k$ filled cells;
 for every $x\in \Z_{2nk+1}\setminus\{0\}$, either $x$ or $-x$ appears in the array;
 the sum of the elements in every row and column is different from $0$ (in $\Z_{2nk+1}$).
Here first we explain the connections with relative difference families and with path decompositions of the complete multipartite graph.
Then we present a complete solution for the existence problem and
a constructive complete solution for the square case and for the rectangular case with no empty cells when
the additional, very restrictive, property
of ``globally simple'' is required. Finally, we show how these arrays can be used to construct biembeddings of complete graphs.
\end{abstract}

\keywords{Heffter array, orthogonal cyclic path decomposition,  complete multipartite graph, embedding}
\subjclass[2010]{05B20; 05C10; 05C38}

\maketitle

\section{Introduction}
An $m\times n$ partially filled (p.f., for short) array on a set $\Omega$ is an $m \times n$ matrix whose  elements belong to $\Omega$
and where some cells can be empty. In 2015 Archdeacon \cite {A} introduced a class of p.f.  arrays which has been extensively studied:
the \emph{Heffter arrays}.
\begin{defi}\label{def:H}
A \emph{Heffter array} $\H(m,n; h,k)$ is an $m \times n$ p. f. array with entries in $\Z_{2nk+1}$ such that:
\begin{itemize}
\item[(\rm{a})] each row contains $h$ filled cells and each column contains $k$ filled cells;
\item[(\rm{b})] for every $x\in \Z_{2nk+1}\setminus\{0\}$, either $x$ or $-x$ appears in the array;
\item[(\rm{c})] the elements in every row and column sum to $0$ (in $\Z_{2nk+1}$).
\end{itemize}
\end{defi}
Trivial necessary conditions for the existence of a $\H(m,n; h,k)$ are $mh=nk$, $3\leq h\leq n$ and $3\leq k
\leq m$. If $m=n$ then $h=k$ and a square $\H(n,n;k,k)$ will be simply denoted by $\H(n;k)$.
The following result has been proved in \cite{ADDY, CDDY, DW}.
\begin{thm}\label{thm:Heffter}
 A $\H(n;k)$ exists for every $n\geq k\geq 3$.
 \end{thm}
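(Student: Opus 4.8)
The plan is to give a constructive proof, separating the placement of the filled cells from the assignment of values, and then to reduce the infinitely many parameter pairs to a manageable family of base cases via a composition technique.

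First I would fix the \emph{shape}: place the filled cells on $k$ consecutive cyclic (broken) diagonals of the $n\times n$ grid, i.e.\ in the cells $(i,i+j)$ with $i\in\Z_n$ and $j\in\{0,1,\dots,k-1\}$, indices read modulo $n$. With this shape every row and every column contains exactly $k$ filled cells, so condition (a) of Definition~\ref{def:H} holds automatically and there are exactly $nk$ filled cells, matching the $nk$ sign-classes $\{x,-x\}$ that must each be represented. What remains is to insert the values $\pm1,\dots,\pm nk$ (one per class) so that every row and every column sums to $0$ in $\Z_{2nk+1}$; this is the whole content of the theorem.

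Next I would split on the parity of $k$ and aim for \emph{shiftable} arrays whenever possible. Call a filling shiftable if each row and each column contains as many positive as negative entries; then adding a fixed constant to every positive entry and subtracting it from every negative one preserves all zero line-sums, so the range of magnitudes used can be translated at will. For even $k$ I would build small shiftable blocks — arranging each diagonal to carry a controlled sign and magnitude pattern so that the $k/2$ positive and $k/2$ negative entries of each line cancel — and then assemble the general array from translated copies of such blocks on disjoint magnitude ranges, using shiftability to guarantee that $\{1,\dots,nk\}$ is covered exactly once while all cancellations survive. This bootstraps the even-$k$ case down to finitely many explicit base arrays. For odd $k$ shiftability is impossible, since an odd number of entries per line cannot split evenly in sign; here I would take a shiftable sub-array on $k-1$ of the diagonals and adjoin one further diagonal whose $n$ entries are chosen to repair both the row and the column sums at once, which requires the partial sums on the first $k-1$ diagonals to be suitably coordinated (in effect, solving a single cyclic system on the last diagonal). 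The boundary case $k=n$, the completely filled square, and the smallest parameter values fall outside the range where the composition bootstraps and must be supplied by direct explicit constructions.

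The main obstacle is the simultaneous, global nature of the constraints: one must satisfy all $2n$ line-sum equations together while respecting the rigid requirement that exactly one of each pair $\{x,-x\}$ appears. The genuinely hard part is therefore not any single construction but making the constructions uniform across every congruence class of $n$ and $k$ and covering the sporadic small cases; this is precisely why the result was assembled from several papers, and I would expect the bulk of the effort to lie in the residue-by-residue bookkeeping and the finite list of base arrays rather than in any one idea.
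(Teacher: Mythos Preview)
The paper does not prove this theorem at all: it is stated as a known result and attributed to \cite{ADDY, CDDY, DW}, with no argument given beyond the citations. So there is no ``paper's own proof'' to compare against.

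That said, your outline is a fair high-level summary of how those cited works actually proceed. The use of $k$ consecutive cyclic diagonals for the support, the notion of \emph{shiftable} arrays (equal numbers of positive and negative entries per line, so that translating magnitudes preserves zero sums) to build the even-$k$ case by composition from small blocks, the treatment of odd $k$ by combining a shiftable $(k-1)$-diagonal piece with one further diagonal, and the need for a finite list of explicit base arrays and sporadic constructions for the boundary cases --- all of this matches the architecture of \cite{ADDY, CDDY, DW}. You are also right that the labour lies in the residue-by-residue casework and in supplying the base arrays, which is exactly why the result was completed across three papers rather than one.

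If the intent was to give an independent proof here, be aware that your sketch, while directionally correct, omits the substantive content: the precise sign/magnitude patterns on the diagonals, the verification that the ``repair'' diagonal in the odd case can always be chosen, and the explicit base arrays. None of these are routine, and the sketch as written would not stand as a proof. If the intent was only to indicate why the cited result is plausible, then the sketch does that job, but the honest move is simply to cite \cite{ADDY, CDDY, DW} as the paper does.
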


In \cite{CPEJC} this concept has been generalized as follows.

\begin{defi}\label{def:lambdaRelative}
Let $v=\frac{2nk}{\lambda}+t$ be a positive integer,
where $t$ divides $\frac{2nk}{\lambda}$,  and
let $J$ be the subgroup of $\Z_{v}$ of order $t$.
 A  $\lambda$-\emph{fold Heffter array $A$ over $\Z_{v}$ relative to $J$}, denoted by $^\lambda\H_t(m,n; h,k)$, is an $m\times n$ p.f.  array
 with elements in $\Z_{v}$ such that:
\begin{itemize}
\item[($\rm{a_1})$] each row contains $h$ filled cells and each column contains $k$ filled cells;
\item[($\rm{b_1})$] the multiset $\{\pm x \mid x \in A\}$ contains  each element of $\Z_v\setminus J$ exactly $\lambda$ times;
\item[($\rm{c_1})$] the elements in every row and column sum to $0$ (in $\Z_v$).
\end{itemize}
\end{defi}
It is easy to see that if $\lambda=t=1$ we find again the arrays of Definition \ref{def:H}.

Classical Heffter arrays and the above generalization have been introduced also in view of their vast variety of applications and
connections with other much studied problems and concepts. In particular there are some recent papers in which Heffter arrays
are investigated to obtain new face $2$-colorable embeddings (briefly biembeddings) see \cite{A, CDY, CMPPHeffter,  CPPBiembeddings, CPEJC, DM}.
On the other hand several authors focused their attention on the existence problem, see \cite{ABD, ADDY, BCDY, CDDY, RelH,DW,MP,MP2,MP3}.

Here we introduce a new class of p.f. arrays, which is related to the one of Heffter arrays.

\begin{defi}\label{def:NZS}
Let $v=\frac{2nk}{\lambda}+t$ be a positive integer,
where $t$ divides $\frac{2nk}{\lambda}$,  and
let $J$ be the subgroup of $\Z_{v}$ of order $t$.
 A  \emph{non-zero sum} $\lambda$-\emph{fold Heffter array $A$ over $\Z_{v}$ relative to $J$}, denoted by $^\lambda \N\H_t(m,n; h,k)$, is an $m\times n$ p.f.  array
 with elements in $\Z_{v}$ such that:
\begin{itemize}
\item[($\rm{a_2})$] each row contains $h$ filled cells and each column contains $k$ filled cells;
\item[($\rm{b_2})$] the multiset $\{\pm x \mid x \in A\}$ contains each element of $\Z_v\setminus J$ exactly $\lambda$ times;
\item[($\rm{c_2})$] the sum of the elements in every row and column is different from $0$ (in $\Z_v$).
\end{itemize}
\end{defi}
Here we focus our attention on the case $\lambda=1$, namely when the entries of the array are pairwise distinct.
If the array is square then $h=k\geq 1$, while if $m\neq n$ then at least one of $h$ and $k$ has to be greater than $1$.
As done for Heffter arrays, a square non-zero sum Heffter array will be simply denoted by $\N\H(n;k)$.

This definition is motivated by Alspach's partial sums conjecture that deals with sets whose sum is different from $0$. In particular the truth of this conjecture would show that any $\N\H_t(m,n; h,k)$ provides two orthogonal path decompositions of $K_{\frac{2nk+t}{t}\times t}$.

The relations between non-zero sum Heffter arrays, Alspach's conjecture and orthogonal path decompositions will be explained in detail in Section \ref{sec:DF}. Then we will analyze the existence problem for $\N\H_t(m,n; h,k)$. In particular, in Section \ref{sec:square}, we will show a construction that completely solves the square case when $t=1$ (but, assuming the existence of a $\H_t(n;k)$, it works also for $t>1$) and, in Section \ref{sec:rectangular}, using a probabilistic approach, we will provide a complete solution to the general case. 
Moreover, in Section \ref{Section:Diagonal}, we will construct $k$-diagonal square non-zero sum Heffter arrays and rectangular ones with no empty cells that satisfy the very restrictive property of being ``globally simple''.
Finally, in the last section, we will show how these arrays can be used to construct $2$-colorable embeddings of complete graphs and we will prove some existence theorems about biembeddings.

\section{Relations with orthogonal path decompositions}\label{sec:DF}
In this section we explain the relation between non-zero sum Heffter arrays, difference families and orthogonal decompositions.
To do this we have to introduce some concepts and notation.

Given  a finite subset $T$ of an abelian group $G$ and an ordering $\omega=(t_1,t_2,\ldots,t_k)$ of the elements
in $T$,  let $s_i=\sum_{j=1}^i t_j$, for any $i=1,\ldots,k$,
be the $i$-th partial sum of $T$.
The ordering $\omega$ is said to be \emph{simple} if $s_b\neq s_c$ for all $1\leq b <  c\leq k$ or,
equivalently,
if there is no proper subsequence of $\omega$ that sums to $0$.
If $\omega$ is a simple ordering then also $\omega^{-1}=(t_k,t_{k-1},\ldots,t_1)$ is simple.
We point out that there are several problems and conjectures about distinct partial sums.
For instance, several years ago Alspach made the following conjecture, whose validity would shorten some cases of known proofs about the existence of cycle decompositions.

\begin{conj}\label{Conj:als}
Let $A\subseteq \mathbb{Z}_v\setminus\{0\}$ such that
$\sum_{a\in A}a\neq0$. Then there exists an ordering of the elements of $A$ such that the partial sums are all
distinct and non-zero.
\end{conj}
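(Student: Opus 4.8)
The plan is to approach the conjecture through the \emph{polynomial method}, which so far has been the most successful tool on ordering problems of this kind, supplemented by direct arguments at the two ends of the range of $|A|$. Write $k=|A|$ and $\sigma=\sum_{a\in A}a\neq 0$, and note that asking the partial sums $s_1,\dots,s_k$ to be simultaneously distinct and non-zero is equivalent to asking that $s_0=0,s_1,\dots,s_k$ be $k+1$ pairwise distinct elements of $\Z_v$; since $A\subseteq\Z_v\setminus\{0\}$ forces $k\le v-1$, this is at least consistent with a counting bound. I would first dispose of the two boundary regimes. When $k$ is below a fixed absolute constant the claim is already known and reduces to a finite combinatorial verification; and when $A=\Z_v\setminus\{0\}$ (so $k=v-1$) the hypothesis $\sigma\neq0$ forces $v$ even, whereupon the statement collapses to a sequencing problem for which explicit orderings are classical. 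The real content lies in the intermediate regime.

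For a group of prime order $v=p$ I would set up a Combinatorial Nullstellensatz argument in the spirit of Alon. Introduce one variable $x_i$ per position, each intended to range over $A$, form the partial sums $s_i=x_1+\cdots+x_i$, and consider a polynomial over $\mathbb{F}_p$ of the shape
\[
P(x_1,\dots,x_k)=\Big(\prod_{1\le i<j\le k}(x_i-x_j)\Big)\Big(\prod_{i=1}^{k}s_i\Big)\Big(\prod_{1\le i<j\le k}(s_i-s_j)\Big),
\]
where the first factor forces the chosen values to be distinct (hence a permutation of $A$ once each $x_i$ runs over $A$), while the remaining factors force the partial sums to be non-zero and pairwise distinct. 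The goal then becomes purely algebraic: to identify a top-degree monomial whose coefficient is non-zero in $\mathbb{F}_p$, after which Alon's theorem supplies an admissible assignment provided the common ground set $A$ is large enough relative to the per-variable degrees, i.e.\ provided $k$ is not too small compared with $p$. Extracting and controlling that coefficient, typically through a Vandermonde/alternating-function computation and a careful degree count (the delicate point being that every variable must be allotted degree strictly below $k$), is where the technical effort concentrates.

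The step I expect to be the genuine obstacle---and the reason the conjecture remains open in general---is the passage from prime order to arbitrary $v$. The natural move is to fix a prime $p\mid v$, solve the problem in the quotient $\Z_v/p\Z_v\cong\Z_p$ (or dually on a subgroup), and lift the solution. The difficulty is that an ordering whose partial sums are distinct and non-zero modulo $p$ can fail upstairs, because two partial sums may coincide within a single coset, and controlling these coset-level collisions uniformly over all $v$ and all admissible $k$ demands number-theoretic input that the polynomial method does not by itself provide. A realistic intermediate target is therefore to marry the prime-order result above with a greedy or absorption completion that fixes the remaining coordinates, which would at least settle all sufficiently large groups; obtaining the uniform statement across the entire intermediate range of $|A|$ is what I expect to remain out of reach.
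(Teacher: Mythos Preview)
The statement you are addressing is labelled a \emph{conjecture} in the paper, not a theorem: the authors do not prove it, they merely quote it (as Alspach's partial sums conjecture) and list the known partial results ($k\le 9$, $k=10$ for prime $v$, $k\in\{v-1,v-2,v-3\}$, small $v$, etc.). There is therefore no ``paper's own proof'' to compare against.

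Your proposal is not a proof either, and you say so yourself: you describe the polynomial-method framework that has produced the best partial progress, correctly isolate the coefficient extraction and the lifting from prime to composite order as the two hard steps, and then state that a uniform resolution is ``what I expect to remain out of reach.'' That is an accurate assessment of the state of the art, but it means what you have written is a research outline, not a proof attempt. In particular, the Combinatorial Nullstellensatz argument you sketch requires each variable's degree in the chosen monomial to be strictly below $|A|=k$, while your polynomial $P$ has total degree $\binom{k}{2}+k+\binom{k}{2}=k^2$, so the average per-variable degree is already $k$; forcing every exponent below $k$ therefore cannot be done without additional cancellation, which is precisely why the method only handles small $k$ so far. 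Until that coefficient calculation or the composite-order lifting is actually carried out, the conjecture remains open, exactly as the paper presents it.
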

Conjecture \ref{Conj:als} has been proved for subsets $A$ of size $k$ of $\Z_v$ whenever
\begin{itemize}
\item[(1)] $k\leq 9$ or $k=10$ and $v$ is a prime (see \cite{A, HOS});
\item[(2)] $k\in \{v-1,v-2\}$ or $k=v-3$ and $v$ is a prime (see \cite{A, BH, HOS});
\item[(3)] $v\leq 25$ (see \cite{ADMS});
\item[(4)] the prime power factors of $v$ are greater than a suitable constant $N$ and $k\leq 11$ (see \cite{CP}).
\end{itemize}

Other problems related to the previous one have been proposed in
 \cite{AL, ADMS, CMPPSums, O}.

Given an $m\times n$ p.f.  array $A$,
the rows and the columns of $A$ are denoted by ${R}_1,\ldots,{R}_m$ and by ${C}_1,\ldots,{C}_n$, respectively.
We denote by $\E(A)$ the list of the elements of the filled cells of $A$
and by $\E({R}_i)$, $\E({C}_j)$, the elements  of the $i$-th row, of the $j$-th column, respectively, of $A$.
Also, by $\omega_{{R}_i}$ and $\omega_{{C}_j}$ we denote, respectively,  an ordering of $\E({R}_i)$ and of $\E({C}_j)$.  If for any $i=1,\ldots, m$
and for any $j=1,\ldots,n$, the orderings $\omega_{{R}_i}$ and $\omega_{{C}_j}$ are simple, we define
by $\omega_r=\omega_{{R}_1}\circ \ldots \circ\omega_{{R}_m}$ the simple ordering for the rows and
  by $\omega_c=\omega_{{C}_1}\circ \ldots \circ\omega_{{C}_n}$ the simple ordering for the columns.
Also, by \emph{natural ordering} of a row (column) of $A$ one means the ordering from left to right (from top to bottom).
  A p.f. array $A$ on an abelian  group $G$ is said to be
  \begin{itemize}
  \item \emph{simple} if there exists a simple ordering for each row and each column of $A$;
  \item \emph{globally simple} if the natural ordering of each row and each column of $A$ is simple.
  \end{itemize}
  \begin{rem}\label{Simple}
We note that, if Conjecture \ref{Conj:als} would be true, then any $\N\H_t(m,n;h,k)$ would be simple.
  Moreover, if $h,k\leq 3$, then every non-zero sum Heffter array is globally simple.
\end{rem}

 Given a graph $\G$, we denote by $V(\G)$ and $E(\G)$ the vertex-set and the edge-set of $\G$, respectively,
and by $^\lambda \G$ the multigraph obtained from $\G$ by repeating each edge $\lambda$ times.
Also by $K_v$, $K_{q \times r}$, $C'_k$, $P_k$ we represent the complete graph on $v$ vertices,
the complete multipartite graph with $q$ parts each of size $r$, the cycle of length $k$ and the path of length $k$
on $k+1$ distinct vertices, respectively.
Given a subgraph $\Gamma$ of a graph $K$, a $\G$-decomposition of $K$ is a set of graphs (said blocks), all isomorphic to $\G$,
whose edge-sets partition the edge-set of $K$, see for instance \cite{BEZ}.
Given an additive group $G$, a $\G$-decomposition $\D$ of a graph $K$ is $G$-regular if, up to isomorphisms,
$V(K)=G$ and for any $B\in {\D}$ also the graph $B+g$ is a block of ${\D}$ for any $g\in G$.
Here we will work with cyclic $P_k$-decompositions, namely decompositions into paths regular under the cyclic group.
It is well known that difference families are a very useful tool for constructing regular decompositions.
We  recall the definition,  see \cite{AB,B98}.
\begin{defi}
Let $\G$ be a graph with vertices in an additive group $G$. The multiset
$\Delta \G=\{\pm(x-y) \mid \{x,y\}\in E(\G)\}$
is called the \emph{list of differences} from $\G$.
\end{defi}
More generally, given a set $\mathcal{W}$ of graphs with vertices in $G$, by $\Delta\mathcal{W}$ one
means the union (counting multiplicities) of all multisets $\Delta\G$, where $\G\in \mathcal{W}$.

\begin{defi}\label{def:DF}
  Let $J$ be a subgroup of an additive group $G$ and let $\G$ be a graph.
  A collection $\F$ of graphs isomorphic to $\G$ and with vertices in $G$
  is said to be a $(G,J,\G,\lambda)$-\emph{difference family} (briefly, DF) \emph{over $G$ relative to} $J$
if each element of $G\setminus J$ appears exactly $\lambda$ times in the list of differences of $\F$
while no element of $J$ appears there.
\end{defi}

If $\lambda=t=1$ one speaks of a $(G,\G,1)$-DF.
Moreover, if $t$ is a divisor of $v$, a $(\Z_v,\frac{v}{t}\Z_v,\G,\lambda)$-DF, where $\frac{v}{t}\Z_v$ denotes the subgroup of $\Z_v$
of order $t$, is simply denoted by $(v,t,\G,\lambda)$-DF.
The connection between relative difference families and decompositions of a complete multipartite multigraph
is given by the following result.

\begin{prop}\label{thm:basecycles}\cite[Proposition 2.6]{BP}
If $\F=\{B_1,\ldots,B_\ell\}$ is a $(G,J,\G,\lambda)$-DF, then $\B=\{B_i+g \mid i=1,\ldots,\ell; g\in G\}$
is a $G$-regular $\G$-decomposition of $^\lambda K_{q\times r}$, where $q=|G:J|$ and $r=|J|$.
Hence, if $\F$ is a $(G,\G,1)$-DF there exists a $G$-regular $\G$-decomposition of $K_{|G|}$.
\end{prop}

 It is well know, see for instance \cite{CMPPHeffter}, that if there
exists a simple $\H_t(m,n;h,k)$, then
there exist a $(2mh+t,t,C'_h,$ $1)$-DF
and  a $(2nk+t,t,C'_k,1)$-DF. In a similar way one can prove the following.

\begin{prop}\label{from Heffter to DF}
If there exists a simple $\N\H_t(m,n;h,k)$, then
there exist a $(2mh+t,t,P_h,1)$-DF
and  a $(2nk+t,t,P_k,1)$-DF.
\end{prop}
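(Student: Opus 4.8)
The plan is to mimic the known construction that passes from a simple classical Heffter array $\H_t(m,n;h,k)$ to a pair of relative difference families, adapting it to the non-zero sum setting. The key structural fact to exploit is that a simple $\N\H_t(m,n;h,k)$ differs from a simple $\H_t(m,n;h,k)$ only in condition $(\mathrm{c})$: the row and column sums need not be $0$. But the path construction I have in mind never uses the fact that the sums are zero; it only uses a \emph{simple} ordering of each row and column. This is precisely why the target graphs are \emph{paths} $P_h$ and $P_k$ rather than \emph{cycles} $C'_h$ and $C'_k$: closing a cycle would require the full sum to return to the starting vertex (i.e.\ sum $=0$), whereas a path needs only that all $h+1$ (resp.\ $k+1$) partial sums be distinct, which is exactly simplicity.

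First I would fix a simple $\N\H_t(m,n;h,k)$ over $\Z_v$ with $v=\frac{2nk}{t}+t$ (in the $\lambda=1$ case $v=2nk+t$), and take the guaranteed simple orderings $\omega_{R_i}=(a_{i,1},\ldots,a_{i,h})$ of each row. For each row I would build a path $B_i$ in $\Z_v$ with vertex sequence given by the partial sums: set $x_0=0$ and $x_\ell=\sum_{j=1}^\ell a_{i,j}$, and let $B_i$ be the path $x_0\,x_1\,\cdots\,x_h$. Because $\omega_{R_i}$ is simple, the partial sums $x_0,\ldots,x_h$ are pairwise distinct, so $B_i$ is a genuine path on $h+1$ distinct vertices, i.e.\ isomorphic to $P_h$. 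The edge-differences of $B_i$ are precisely $\{\pm a_{i,j} : j=1,\ldots,h\}$, so $\Delta B_i=\{\pm x : x\in \E(R_i)\}$. Collecting $\F_r=\{B_1,\ldots,B_m\}$ and invoking condition $(\mathrm{b_2})$, the union of the edge-difference lists covers each element of $\Z_v\setminus J$ exactly once (since $\lambda=1$) and no element of $J$. Hence $\F_r$ is a $(\Z_v,J,P_h,1)$-DF, which in the notation of the paper is a $(2mh+t,t,P_h,1)$-DF, using $mh=nk$ so that $v=\frac{2nk}{t}+t=\frac{2mh}{t}+t$ and thus $|G|=v$, $|J|=t$. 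Repeating the identical argument column by column with the simple orderings $\omega_{C_j}$ yields a $(2nk+t,t,P_k,1)$-DF.

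The one genuine thing to check — and what I expect to be the only subtle point — is that the partial sums really do give \emph{distinct vertices} so that each $B_i$ is a path and not a closed or self-touching walk; this is exactly the content of simplicity, including the requirement $s_b\neq s_c$ for $b<c$, and it is worth remarking that we do \emph{not} need $x_h\neq x_0$ to fail, i.e.\ we make no claim that the walk is not closed. Indeed for a $\N\H$ the sum is nonzero, so automatically $x_h\neq x_0$ and the path does not accidentally close into a cycle; but even this is not needed, since simplicity already forbids $s_h=s_0=0$. I would close by noting that this is the precise analogue of Proposition~\ref{thm:basecycles} read backwards: the simplicity hypothesis is what converts the combinatorial array into a bona fide family of subgraphs whose difference list is controlled by $(\mathrm{b_2})$, and the non-zero-sum condition $(\mathrm{c_2})$ is exactly what makes paths (rather than cycles) the correct target.
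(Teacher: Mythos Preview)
Your argument is correct and is exactly what the paper has in mind; the paper gives no proof, merely remarking that the result is proved ``in a similar way'' to the known passage from simple Heffter arrays to relative cycle difference families. Two small slips worth fixing: in the $\lambda=1$ setting the ambient group has order $v=2nk+t$ (not $\tfrac{2nk}{t}+t$, which you twice conflate with the general $\lambda$-fold formula), and it is condition $(\mathrm{c_2})$, not simplicity per se, that guarantees $s_h\neq 0$ and hence that the endpoints $x_0=0$ and $x_h$ of each row-path are distinct.
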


\begin{rem}\label{rem:ortho}
Let $\F_h$ and $\F_k$ be the relative DFs constructed in the previous proposition.
  Note that for any $P_h\in \F_h$ and any $P_k\in \F_k$, we have
$|\Delta P_h \cap \Delta P_k| \in \{0,2\}.$
\end{rem}

We recall the following definition, see for instance \cite{CY1}.
\begin{defi}
  Two graph decompositions $\D$ and $\D'$ of a simple graph $K$ are said \emph{orthogonal} if and only if for any $B$ of $\D$
  and any $B'$ of $\D'$, $B$ intersects $B'$ in at most one edge.
\end{defi}

Heffter arrays are also used for getting orthogonal cycle decompositions, see for instance \cite{BCP}. Analogously from non-zero sum Heffter arrays one can construct orthogonal path decompositions as shown by the following result.

\begin{prop}\label{HeffterToDecompositions}
  Let $A$ be a $\N\H_t(m,n;h,k)$ simple with respect to the orderings $\omega_r$ and $\omega_c$. Then:
  \begin{itemize}
    \item[(1)] there exists a cyclic $h$-path decomposition $\D_{\omega_r}$ of $K_{\frac{2mh+t}{t}\times t}$;
    \item[(2)] there exists a cyclic $k$-path decomposition $\D_{\omega_c}$ of $K_{\frac{2nk+t}{t}\times t}$;
    \item[(3)] the decompositions $\D_{\omega_r}$ and $\D_{\omega_c}$ are orthogonal.
  \end{itemize}
\end{prop}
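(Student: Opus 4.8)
\emph{Proof plan.} The plan is to combine the difference families produced in Proposition~\ref{from Heffter to DF} with the development mechanism of Proposition~\ref{thm:basecycles}, and then to read off orthogonality from Remark~\ref{rem:ortho}.

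First I would record that, counting the filled cells of $A$ in two ways, $mh=nk$; hence $2mh+t=2nk+t$ and the two graphs appearing in (1) and (2) are one and the same, namely $K_{\frac{2nk+t}{t}\times t}$, so that the notion of orthogonality in (3) makes sense. For (1) and (2) I would apply Proposition~\ref{from Heffter to DF} to the simple orderings $\omega_r$ and $\omega_c$: this yields a $(2mh+t,t,P_h,1)$-DF $\F_h$ from the rows and a $(2nk+t,t,P_k,1)$-DF $\F_k$ from the columns, both over $G=\Z_{2nk+t}$ relative to the subgroup $J$ of order $t$. Developing these families by $G$ as in Proposition~\ref{thm:basecycles} gives $G$-regular (hence cyclic) $P_h$- and $P_k$-decompositions of $K_{\frac{2nk+t}{t}\times t}$; I set $\D_{\omega_r}$ and $\D_{\omega_c}$ to be these two decompositions, which establishes (1) and (2).

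Next I would make the structure of the base blocks explicit, since it is needed for (3). For each row $R_i$ the base path $P^{(i)}\in\F_h$ arising from $\omega_{R_i}=(a_1,\dots,a_h)$ has as vertices the partial sums $0,a_1,a_1+a_2,\dots$; simplicity of the ordering guarantees these are distinct, so $P^{(i)}$ is a genuine path, and each of its edges realizes one unordered difference $\{x,-x\}$ with $x\in\E(R_i)$. Because $A$ is a non-zero sum Heffter array with $\lambda=1$, the entries of $R_i$ are pairwise distinct and no two are opposite, so each unordered difference occurs on \emph{exactly one} edge of $P^{(i)}$; the same holds for the column base paths $Q^{(j)}\in\F_k$ associated with the $C_j$.

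Finally, for the orthogonality statement (3), I would take arbitrary blocks $B=P^{(i)}+g\in\D_{\omega_r}$ and $B'=Q^{(j)}+g'\in\D_{\omega_c}$ and show $|E(B)\cap E(B')|\le 1$. Since translation by a group element preserves differences, any common edge of $B$ and $B'$ must realize a difference lying in $\Delta P^{(i)}\cap\Delta Q^{(j)}$. By Remark~\ref{rem:ortho} this intersection has size $0$ or $2$: in the first case $B$ and $B'$ share no edge, while in the second case it consists of a single pair $\{x,-x\}$, where $x$ is the entry in cell $(i,j)$. By the previous paragraph, $B$ then carries exactly one edge with difference $\pm x$ and so does $B'$, whence $B$ and $B'$ meet in at most that one edge. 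This is precisely orthogonality. The step I expect to be the crux is this last one: passing rigorously from ``$B$ and $B'$ share an edge'' to ``they share a difference'' in the developed blocks, and then using the uniqueness of the edge carrying a prescribed difference inside a single base path to cap the intersection at one edge; everything else is an application of the two quoted propositions.
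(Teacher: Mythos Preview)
Your proposal is correct and follows exactly the route the paper takes: parts (1) and (2) are obtained by combining Proposition~\ref{from Heffter to DF} with Proposition~\ref{thm:basecycles}, and part (3) is deduced from Remark~\ref{rem:ortho}. The paper's proof is in fact just a one-line invocation of these three references; your version merely unpacks the orthogonality step (shared edge $\Rightarrow$ shared difference $\Rightarrow$ unique edge carrying $\pm x$ in each translated base path) that the paper leaves implicit.
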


\begin{proof}
(1) and (2) follow from Propositions \ref{thm:basecycles} and \ref{from Heffter to DF}.
Then (3) follows from Remark \ref{rem:ortho}.
\end{proof}

We point out that the notion of simple array can be extended also to $^\lambda \N\H_t(m,n;h,k)$ with $\lambda>1$, similar to what was done in \cite{CPEJC} for Heffter arrays.
Then reasoning as above one can get two path decompositions of the $\lambda$-fold multipartite complete graph, but in this case the decompositions are not orthogonal.

\begin{ex}
Consider the $\N\H(3,4;4,3)$, say $A$, below.
\begin{center}
\begin{footnotesize}
$\begin{array}{|r|r|r|r|}
\hline 1 & 10 & -11 & 2 \\
\hline 8 & 6 & -3 & 5   \\
\hline  -7  & 12 &  9 & 4  \\\hline
\end{array}$
\end{footnotesize}
\end{center}
Firstly we find simple orderings for the rows and the columns of $A$. Note that, for instance, the natural
orderings from left to right of the first row and of the third row are not simple.
While every ordering of the columns is simple since $k=3$.

Set, for instance:
\begin{align*}
\omega_1&=(1,2,10,-11), & \nu_1&=(1,8,-7), \\
\omega_2&=(8,6,-3,5), & \nu_2&=(10,6,12), \\
 \omega_3&=(4,-7,12,9), &  \nu_3&=(-11,-3,9), \\
& & \nu_4&=(2,5,4).
\end{align*}
Starting from these orderings we obtain the following paths:
\begin{align*}
P_{\omega_1}&=[0,1,3,13,2], & P_{\nu_1}&=[0,1,9,2],\\
 P_{\omega_2}&=[0,8,14,11,16], & P_{\nu_2}&=[0,10,16,3], \\
 P_{\omega_3}&=[0,4,22,9,18], & P_{\nu_3}&=[0,14,11,20], \\
 & & P_{\nu_4}&=[0,2,7,11].
\end{align*}
Set $\F_{\omega}=\{P_{\omega_i}\mid i=1,\ldots,3\}$ and $\F_{\nu}=\{P_{\nu_i}\mid i=1,\ldots,4\}$;
by the construction of the paths it immediately follows that
$\Delta \F_{\omega}= \Delta \F_{\nu}= \Z_{25}\setminus \{0\}$.
Hence $\F_{\omega}$ is a $(25,P_4,1)$-DF and $\F_{\nu}$ is a $(25,P_3,1)$-DF.
Set ${\D}_{\omega}=\{P_{\omega_i}+g\mid i=1,\ldots,3, g\in \Z_{25}\}$
and ${\D}_{\nu}=\{P_{\nu_i}+g\mid i=1,\ldots,4, g\in \Z_{25}\}$.
Then $\D_{\omega}$ is a cyclic $P_4$-decomposition of $K_{25}$,
$\D_{\nu}$ is a cyclic $P_3$-decomposition of $K_{25}$. Moreover, $\D_{\omega}$ and $\D_{\nu}$ are orthogonal.
\end{ex}

\section{A constructive complete solution for the square case}\label{sec:square}
In this section we present a complete solution for the existence of a $\N\H(n;k)$ for any $n\geq k\geq 1$.

\begin{rem}\label{rem:k1}
The existence of a globally simple $\N\H(n;1)$, say $A=(a_{i,j})$, for every $n\geq 1$ is trivial.
In fact it is sufficient to set $a_{i,i}=i$ for every $i=1,\ldots,n$, while all other cells are empty.
\end{rem}

\begin{prop}\label{prop:k2}
  For every $n\geq2$, there exists a globally simple $\N\H(n;2)$.
\end{prop}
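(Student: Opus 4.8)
The plan is to give an explicit construction and then check the (few) conditions that are not automatic. First I would unwind what the case $k=2$ forces: here $v=2n\cdot 2+1=4n+1$, the array has exactly $2n$ filled cells, and condition $(\mathrm{b_2})$ with $\lambda=t=1$ says that exactly one element out of each pair $\{x,-x\}$, $x\in\{1,\dots,2n\}$, must appear, all entries being distinct. The key preliminary observation is that for $k=2$ both of the ``interesting'' requirements come for free: if two distinct entries $a,b$ lie in a common row or column, then they are representatives of two \emph{different} pairs, so $b\neq -a$ and hence $a+b\neq 0$, giving $(\mathrm{c_2})$; and global simplicity holds automatically because $k\le 3$ (Remark \ref{Simple}). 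Thus the problem collapses to exhibiting a placement of the $2n$ representatives with exactly two of them in each row and each column.

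For the placement I would use the main diagonal together with the cyclic super-diagonal. Explicitly, set $a_{i,i}=i$ for $i=1,\dots,n$, set $a_{i,i+1}=n+i$ for $i=1,\dots,n-1$, and close up the pattern with $a_{n,1}=2n$; every remaining cell is empty. By construction each row and each column contains exactly two filled cells, and the entries are precisely $1,2,\dots,2n$, i.e.\ one representative for each pair $\{x,-x\}$ of $\Z_{4n+1}\setminus\{0\}$. Hence $(\mathrm{a_2})$ and $(\mathrm{b_2})$ hold.

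Although $(\mathrm{c_2})$ is already guaranteed by the observation above, I would record the actual sums to make the statement self-contained: row $i$ sums to $2i+n$, column $j$ sums to $2j+n-1$ for $2\le j\le n$, and column $1$ sums to $1+2n$. Each of these values lies strictly between $0$ and $4n+1$, so none is $0$ in $\Z_{4n+1}$. This completes the verification.

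I do not expect a genuine obstacle here: the whole content is the realization that, for $k=2$, the non-zero-sum and simplicity conditions are automatic, so only a bookkeeping-level placement remains. If anything is delicate it is choosing the assignment so that the two entries sharing a line are ``small'' enough that their sum does not wrap around modulo $4n+1$; the diagonal assignment pairing $i$ with $n+i$ keeps every line sum inside the safe interval $[n+2,\,3n]$ (the single wrap cell contributing $2n+1$, which also lies in this range), and this is exactly what makes the interval check in the previous paragraph immediate.
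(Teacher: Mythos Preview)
Your construction is identical to the paper's (diagonal entries $a_{i,i}=i$ and super-diagonal entries $a_{i,i+1}=n+i$ with indices taken modulo $n$), and your verification is correct; the paper simply asserts that the resulting array is a $\N\H(n;2)$ ``immediately'' without spelling out the details. Your additional observation that condition $(\mathrm{c_2})$ is automatic for $k=2$ (since any two distinct entries represent different pairs $\{x,-x\}$) is a nice conceptual shortcut that the paper does not record.
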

\begin{proof}
Let $A=(a_{i,j})$ be an $n\times n$ matrix with $n\geq 2$.
Set $a_{i,i}=i$ and $a_{i,i+1}=n+i$ for every $i=1,\ldots,n$, where the subscript are considered modulo $n$.
It is immediate to see that $A$ is a $\N\H(n;2)$.
\end{proof}

We recall that given a square matrix $A$ of order $n$ a \emph{transversal} of $A$ is a set of $n$ filled cells such that
no two belong to the same row and no two belong to the same column.

\begin{lem}\label{lem:trans}
A square p.f. array with $k\geq 1$ filled cells in each row and in each column
admits a transversal.
\end{lem}
\begin{proof}
Let $A$ be an $n \times n$ array as in the statement.
Associate to $A$ a bipartite graph, say $\Gamma$, on $2n$ vertices,
in which the vertices of one part represent the $n$ rows
and the vertices of the other part represent the $n$ columns of $A$ and there is an edge connecting two vertices if and only if the cell of the corresponding
row and column is not an empty cell. Note that a transversal of $A$ is nothing but a perfect matching of $\Gamma$ which, by construction, is $k$-regular and
it is well-known that a regular and bipartite graph admits a perfect matching.
\end{proof}

\begin{thm}\label{thm:darelative}
If there exists a $\H_t(n;k)$ then there also exists a $\N\H_t(n;k)$.
\end{thm}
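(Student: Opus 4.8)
The plan is to obtain a $\N\H_t(n;k)$ from a given $\H_t(n;k)$ by a minimal sign change: negating the entries lying on a single transversal. Write the given Heffter array as $A=(a_{i,j})$ over $\Z_v$ with $v=2nk+t$, so that it satisfies $(\mathrm{a}_1)$, $(\mathrm{b}_1)$, $(\mathrm{c}_1)$. The only property I need to \emph{create} is the non-zero sum condition $(\mathrm{c}_2)$, so the guiding observation is that both the filling pattern $(\mathrm{a}_2)$ and the difference-covering condition $(\mathrm{b}_2)$ are completely insensitive to replacing an entry $a$ by $-a$: the unordered pair $\{a,-a\}$ is unchanged, and hence so is the multiset $\{\pm x\mid x\in A\}$.

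Concretely, I would first apply Lemma \ref{lem:trans} to $A$, which (being a square p.f. array with $k\ge 1$ filled cells per row and per column) admits a transversal $T=\{(i,\sigma(i))\mid i=1,\dots,n\}$ for some permutation $\sigma$ of $\{1,\dots,n\}$. Let $A'$ be the array obtained from $A$ by replacing $a_{i,\sigma(i)}$ with $-a_{i,\sigma(i)}$ for every $i$, and leaving every other cell untouched. Then $A'$ occupies exactly the cells of $A$, giving $(\mathrm{a}_2)$, and by the observation above it satisfies $(\mathrm{b}_2)$. Since $T$ meets each row and each column in exactly one cell, and every line of $A$ summed to $0$ by $(\mathrm{c}_1)$, the $i$-th row of $A'$ now sums to $-2a_{i,\sigma(i)}$ and the $j$-th column to $-2a_{\sigma^{-1}(j),j}$.

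Thus everything reduces to verifying that $2a\neq 0$ in $\Z_v$ for every entry $a$ of $A$, and I expect this to be the only real obstacle. When $v$ is odd it is immediate, since then $2$ is invertible and no entry is $0$. The delicate case is $v$ even. Here I would use that $v=2nk+t$ has the same parity as $t$, so $v$ is even precisely when $t$ is even; and in that case the unique involution $v/2$ of $\Z_v$ lies in the subgroup $J=\frac{v}{t}\Z_v$, because $v/2=\frac{t}{2}\cdot\frac{v}{t}$ with $\frac{t}{2}\in\Z$. By $(\mathrm{b}_1)$ no element of $J$ occurs among the entries of $A$, so in particular no entry equals $0$ or $v/2$. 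Since $2a=0$ in $\Z_v$ forces $a\in\{0,v/2\}$, every entry satisfies $2a\neq 0$; consequently all the sums $-2a_{i,\sigma(i)}$ and $-2a_{\sigma^{-1}(j),j}$ are non-zero, condition $(\mathrm{c}_2)$ holds, and $A'$ is the desired $\N\H_t(n;k)$.
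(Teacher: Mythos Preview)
Your proof is correct and follows essentially the same approach as the paper: take a transversal of the $\H_t(n;k)$ via Lemma~\ref{lem:trans}, negate the entries on it, and argue that the resulting line sums $-2a$ are nonzero because the involution of $\Z_{2nk+t}$ (when it exists) lies in the forbidden subgroup $J$. Your write-up is in fact slightly more explicit than the paper's, spelling out the computation of the new line sums and the verification that $v/2\in J$ when $t$ is even.
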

\begin{proof}
Let $A$ be a  $\H_t(n;k)$. 
By Lemma \ref{lem:trans}, there exists a transversal $T$ of $A$ containing $n$ filled cells.
Let $H$ be the matrix obtained from $A$ by changing the signs of the elements of $T$ and leaving all other cells unchanged.
Note that to prove that $H$ is a $\N\H_t(n;k)$ we have only to check condition ($\rm{c_2})$ of Definition \ref{def:NZS}.
Now we remark that if $t$ is odd, $\Z_{2nk+t}$ does not contain an involution, while
if $t$ is even the involution of $\Z_{2nk+t}$ belongs to the subgroup of order $t$ of $\Z_{2nk+t}$.
Hence, in any case, $A$ does not contain the involution.
Also, we recall that $A$ is a $\H_t(n;k)$ hence each row and each column of $A$ sums to zero. 
It follows that changing the sign of exactly one element in each row and in each column of $A$
all the sums are now different from zero. The result follows.
\end{proof}

\begin{cor}\label{thm:square}
For every $n\geq k\geq 1$ there exists a $\N\H(n;k)$.
\end{cor}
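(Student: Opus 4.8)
The plan is to deduce this as an immediate corollary by splitting into cases according to the value of $k$ and invoking the results already established. The key observation is that the square non-zero sum Heffter array $\N\H(n;k)$ is, by the notational convention fixed after Definition \ref{def:NZS}, precisely the array $\N\H_1(n;k)$, i.e.\ the case $\lambda=t=1$; and likewise $\H(n;k)=\H_1(n;k)$. Thus Theorem \ref{thm:darelative}, applied with $t=1$, tells us that the existence of a $\H(n;k)$ guarantees the existence of a $\N\H(n;k)$.

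First I would dispose of the two small cases that lie outside the range of the general existence theorem for Heffter arrays. For $k=1$, Remark \ref{rem:k1} already exhibits a (globally simple) $\N\H(n;1)$ for every $n\geq 1$. For $k=2$, Proposition \ref{prop:k2} provides a (globally simple) $\N\H(n;2)$ for every $n\geq 2$. Since the constraint $n\geq k$ forces $n\geq k$ in each case, these two cases cover all admissible pairs $(n,k)$ with $k\in\{1,2\}$.

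For the remaining range $n\geq k\geq 3$, I would apply Theorem \ref{thm:Heffter}, which asserts the existence of a $\H(n;k)$ for all $n\geq k\geq 3$. Feeding this into Theorem \ref{thm:darelative} with $t=1$ then yields a $\N\H(n;k)$, completing this case and hence the full range $n\geq k\geq 1$.

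I do not expect any genuine obstacle here: the statement is a packaging of three already-proved results, and the only point requiring a moment's care is verifying that the three cases $k=1$, $k=2$, and $k\geq 3$ exhaust all pairs with $n\geq k\geq 1$, together with the matching of notation ($\N\H(n;k)=\N\H_1(n;k)$ and $\H(n;k)=\H_1(n;k)$) so that Theorem \ref{thm:darelative} applies in its $t=1$ instance.
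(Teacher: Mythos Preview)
Your proposal is correct and follows essentially the same approach as the paper's own proof: both handle $k=1$ via Remark~\ref{rem:k1}, $k=2$ via Proposition~\ref{prop:k2}, and $k\geq 3$ by combining Theorem~\ref{thm:Heffter} with Theorem~\ref{thm:darelative}. The only difference is that you make explicit the identification $\N\H(n;k)=\N\H_1(n;k)$ needed to invoke Theorem~\ref{thm:darelative} with $t=1$, which the paper leaves implicit.
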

\begin{proof}
If $k=1$ or $k=2$ the result follows from Remark \ref{rem:k1} and Proposition \ref{prop:k2}, respectively.
For $k\geq 3$, the result follows from Theorems \ref{thm:Heffter} and \ref{thm:darelative}.
\end{proof}

In the next section we will present a generalization of the previous corollary, whose proof is not constructive.

\begin{ex}
Here we have the $\H(7;5)$ constructed in \cite{ADDY}. Note that the cells with bold elements
form a transversal of the array.
\begin{center}
\begin{footnotesize}
$\begin{array}{|r|r|r|r|r|r|r|}\hline
\bf{-10} &  & 16  & -1 & -2 & -3 &  \\
\hline   & \bf{-4} &  & -6 & -7 & -5 & 22   \\
\hline  -30 & 29 & \bf{-9} & -8 &   &  & 18  \\
\hline  -11 &   & -12 & \bf{28} & -31 & 26 &  \\
\hline   & -14 & -15 & -13 &  & \bf{17} & 25 \\
\hline  27 & -34 & 20 &   & \bf{19} &  & -32  \\
\hline  24 & 23 &  &  & 21 &  -35 & \bf{-33}  \\\hline
\end{array}$
\end{footnotesize}
\end{center}

Applying the  proof of Theorem \ref{thm:darelative} we get the $\N\H(7;5)$ below.
\begin{center}
\begin{footnotesize}
$\begin{array}{|r|r|r|r|r|r|r|}\hline
10 &  & 16  & -1 & -2 & -3 &  \\
\hline   & 4 &  & -6 & -7 & -5 & 22   \\
\hline  -30 & 29 & 9 & -8 &   &  & 18  \\
\hline  -11 &   & -12 & -28 & -31 & 26 &  \\
\hline   & -14 & -15 & -13 &  & -17 & 25 \\
\hline  27 & -34 & 20 &   & -19 &  & -32  \\
\hline  24 & 23 &  &  & 21 &  -35 & 33  \\\hline
\end{array}$
\end{footnotesize}
\end{center}
\end{ex}

\section{A complete solution for the rectangular case}\label{sec:rectangular}
In this section we prove the existence of a $\N\H_t(m,n;h,k)$ whenever the trivially necessary conditions
$m\geq k\geq 1$, $n\geq h\geq 1$, $nk=mh$ and $t | 2nk$ are satisfied.

\begin{lem}\label{ExistenceScheletro}
Let $m\geq k\geq 1$ and $n\geq h\geq 1$ be such that $nk=mh$. Then there exists an $m\times n$ p.f. array $A$ that has exactly $h$ filled cells in each row and exactly $k$ filled cells in each column.
\end{lem}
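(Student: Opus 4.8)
The plan is to give an explicit cyclic (staircase) construction rather than appealing to an existence theorem for $0$--$1$ matrices with prescribed margins. I would index the rows by $0,1,\dots,m-1$ and the columns by $\Z_n=\{0,1,\dots,n-1\}$, and fill in row $i$ exactly the $h$ cells lying in columns $ih,\,ih+1,\,\dots,\,ih+h-1$, all reduced modulo $n$. Equivalently, reading the grid row by row, one lays down filled cells consecutively around the cyclic sequence of columns, depositing $h$ of them in each successive row. Since $h\le n$, the $h$ columns assigned to a fixed row are $h$ consecutive residues modulo $n$ and hence pairwise distinct; thus every row receives exactly $h$ filled cells, which settles the row condition immediately.

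The heart of the argument is the column count. The multiset of all column indices used by the construction is $\{\,s \bmod n : s=0,1,\dots,mh-1\,\}$, because row $i$ contributes precisely the block $ih,\dots,ih+h-1$. As $mh=nk$, the integers $0,1,\dots,nk-1$ cover each residue class modulo $n$ exactly $k$ times, so every column is used exactly $k$ times in total. It then remains to upgrade this multiset count to a count of \emph{distinct} filled cells: since within any single row the $h$ chosen columns are distinct, no column can be hit twice by the same row, and therefore the $k$ occurrences of a given column lie in $k$ distinct rows. Hence each column contains exactly $k$ filled cells, as required.

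The only point that needs care---what I would call the one real obstacle---is exactly this last step, namely that the even distribution of residues (a purely additive fact) genuinely translates into $k$ filled cells in distinct rows rather than, say, into some column being occupied twice within one row. This is where the hypothesis $h\le n$ is essential: it guarantees that no row wraps all the way around the column set and revisits a column. The companion bound $k\le m$ then comes for free, since a column meeting $k$ distinct rows forces $k\le m$, consistent with the stated hypotheses. Beyond this I expect no difficulty, as the construction is completely explicit and every verification reduces to the single modular identity $mh=nk$ together with the inequality $h\le n$.
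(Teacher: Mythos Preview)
Your proof is correct, and it follows a genuinely different route from the paper's. The paper works group-theoretically: it considers the subgroup $S=\langle(1,1)\rangle$ of $\Z_m\times\Z_n$, observes that filling the cells corresponding to $S$ yields $\lcm(m,n)/m$ filled cells per row and $\lcm(m,n)/n$ per column, and then takes $r=nk/\lcm(m,n)$ cosets of $S$ to reach the desired counts $h$ and $k$. Your staircase construction is instead a direct sequential layout: you place the $mh$ filled cells in column positions $0,1,\dots,mh-1$ read modulo $n$, grouping them $h$ to a row, and then argue via $mh=nk$ that each column residue is hit exactly $k$ times, with $h\le n$ guaranteeing no repetition within a row.

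Both arguments are short and fully explicit, but they produce different arrays in general (for instance with $m=4$, $n=6$, $h=3$, $k=2$ your construction fills two $2\times3$ blocks while the paper's coset construction fills a checkerboard pattern). Your approach is arguably more elementary, needing only the single congruence $mh=nk$ and the bound $h\le n$, whereas the paper's coset approach has a cleaner structural explanation for why the row and column counts come out uniform and makes the role of $\lcm(m,n)$ transparent.
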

\begin{proof}
Let $m,n,h,k$ be as in the statement. Let $S$ be the subgroup of $\Z_m \times \Z_n$ generated by $(1,1)$.
It is not hard to see that the order of $S$ is equal to $\text{lcm}(m,n)$. Now consider the $m \times n$ empty array, say $A$,
whose rows represent the elements of $\Z_m$ while the columns represent the elements of $\Z_n$.
One can check that filling exactly the cells of $A$ correspoding to the elements of the subgroup $S$, we get a p.f. array with exactly
$\frac{\text{lcm}(m,n)}{m}$ filled cells in each row
and $\frac{\text{lcm}(m,n)}{n}$ filled cells in each column.
Now note that $\text{lcm}(m,n)$ divides $nk=mh$ and set $r=\frac{nk}{\text{lcm}(m,n)}$.
Consider $r$ distinct cosets of $S$ in $\Z_m \times \Z_n$. Filling exactly the cells of $A$ corresponding to the elements
of these cosets we obtain the required array.
 \end{proof}

\begin{thm}\label{Th:existence}
For every $m\geq k\geq 1$, $n\geq h\geq 1$ and $t$ such that $nk=mh$ and $t|2nk$ there exists a $\N\H_t(m,n;h,k)$.
\end{thm}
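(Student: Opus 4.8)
The plan is to fix the pattern of filled cells first and then fill it at random, using the Lov\'asz Local Lemma to show that a good filling exists. Set $v=2nk+t$ and let $J\le\Z_v$ be the subgroup of order $t$, so that $\Z_v\setminus J$ splits into $nk$ antipodal pairs $\{y,-y\}$; exactly as in the proof of Theorem \ref{thm:darelative}, the involution of $\Z_v$ (when $v$ is even) lies in $J$, so no such pair is degenerate. By Lemma \ref{ExistenceScheletro} there is an $m\times n$ skeleton with $h$ filled cells per row and $k$ per column; fix any bijection between its $nk$ cells and the $nk$ antipodal pairs, and choose one representative from each pair. This already guarantees condition $(\mathrm{a_2})$ and, since each pair is used exactly once, condition $(\mathrm{b_2})$ of Definition \ref{def:NZS}, whatever signs we subsequently attach. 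It remains only to arrange the signs so that $(\mathrm{c_2})$ holds.

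First I would randomise the signs: for each filled cell independently replace its entry $y$ by $\epsilon\,y$ with $\epsilon$ a uniform $\pm1$, all choices mutually independent. For a row $R_i$ with entries $y_1,\dots,y_h$ the sum is $X_{R_i}=\sum_{l=1}^h \epsilon_l y_l$, and I let $B_{R_i}$ be the bad event $\{X_{R_i}=0\}$; define $B_{C_j}$ analogously for columns. I would first dispose of the short lines: if $h\le 2$ then $\epsilon_1 y_1$ and $\epsilon_1y_1+\epsilon_2y_2$ can never vanish, because the representatives come from distinct antipodal pairs, so $y_1\ne\pm y_2$ and neither is the involution; hence $\Pr[B_{R_i}]=0$, and symmetrically for columns with $k\le 2$. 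Thus only lines of length at least $3$ can ever be bad.

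The core step is to bound $\Pr[B_{R_i}]$ and $\Pr[B_{C_j}]$ and to feed these into the local lemma. Conditioning on all but one sign shows at once that each such probability is at most $1/2$, but this is far too weak; the real work is an anti-concentration (Littlewood--Offord type) estimate for a random $\pm$ combination of distinct elements of $\Z_v$, which I would use to bound $\Pr[B_{R_i}]$ by a quantity decreasing in $h$ and $\Pr[B_{C_j}]$ by one decreasing in $k$. The dependency structure is bipartite and sparse: distinct row events involve disjoint cells and are mutually independent, likewise the column events, and $B_{R_i}$ depends on $B_{C_j}$ only when column $j$ meets row $i$ in a filled cell, so $B_{R_i}$ has at most $h$ neighbours and $B_{C_j}$ at most $k$. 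Applying the asymmetric Lov\'asz Local Lemma with these degrees, I would conclude that $\Pr\big[\,\bigcap_i \overline{B_{R_i}}\cap\bigcap_j\overline{B_{C_j}}\,\big]>0$, so some sign assignment makes every row and column sum nonzero, yielding the desired $\N\H_t(m,n;h,k)$.

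The main obstacle is exactly the quantitative matching between the per-line probability bound and the dependency degrees: a straightforward anti-concentration bound only gives order $1/\sqrt{h}$, whereas the product form of the local lemma wants the bound to beat roughly $1/(e(h+1))$. I expect that one must either sharpen the anti-concentration estimate in the cyclic group $\Z_v$, exploiting that $v=2nk+t$ is large and that the representatives can be chosen to be well spread, or else choose the bijection between pairs and cells non-randomly so that the entries along each line are sufficiently \emph{dissociated} to force an exponentially small zero-sum probability. Controlling the regime where $h$ or $k$ is large relative to $\log v$ is the delicate point of the whole argument.
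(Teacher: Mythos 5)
Your plan is sound up to the point you yourself flag, but that flagged point is a genuine, unresolved gap, and it is not a technicality: the sign-randomization + Lov\'asz Local Lemma route cannot be closed with any known off-the-shelf anti-concentration bound. The problem is the asymmetric regime. Take $k=3$ and $h$ large (allowed: e.g.\ $m=3h$, $n=h^2$, so $nk=mh$, $m\geq k$, $n\geq h$). A column event is the vanishing of $\epsilon_1y_1+\epsilon_2y_2+\epsilon_3y_3$, and for an unluckily chosen bijection (say $y_3=y_1+y_2$) this has probability exactly $1/4$ --- a constant, not decreasing in anything. A row event has up to $h$ column neighbours in the dependency graph, so the asymmetric LLL forces you to take $x_{C}\geq \Pr[B_{C}]\approx 1/4$ for those neighbours, making the factor $(1-x_C)^h$ exponentially small in $h$; but your row probability $\Pr[B_{R}]$ is only polynomially small (Littlewood--Offord gives $O(h^{-1/2})$, Hal\'asz/S\'ark\"ozy--Szemer\'edi type improvements at best $O(h^{-3/2})$, and these need extra care in $\Z_v$ with $v$ composite). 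So the LLL inequalities fail outright in this regime. Fixing this would require either choosing the bijection so that every row \emph{and} every column is a dissociated set --- which is essentially the statement you are trying to prove --- or an exponential anti-concentration bound that is simply false for arbitrary placements.

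The paper avoids all of this by randomizing something else: it fixes the representative set $S$ (one element from each antipodal pair, legitimate since the involution, if any, lies in $J$) and places the elements of $S$ into the skeleton by a \emph{uniformly random bijection}, with no sign randomization at all. The key estimate is then elementary: conditioning on the first $h-1$ entries of a row, at most one of the remaining $mh-(h-1)$ elements of $S$ completes a zero sum, so $\Pr\bigl[\sum_{x\in R_1}x=0\bigr]\leq \frac{1}{mh-h+1}$, and by linearity of expectation the expected number of zero-sum rows is at most $\frac{m}{mh-h+1}\leq \frac12$ (using $m\geq 2$, $h\geq 3$; lines of length $\leq 2$ can never vanish, as you also observed). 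The same holds for columns, and an equality analysis shows the two bounds cannot both be tight, so the expected total number of bad lines is strictly less than $1$ and some placement works. This first-moment argument needs no dependency structure, no anti-concentration, and no LLL --- the union over all $m+n$ lines is beaten because the per-line probability shrinks like $1/(mh)$, not like a function of the line length alone. If you want to salvage your sign-flipping idea, it does appear in the paper, but only for the square case (Theorem \ref{thm:darelative}), where signs are flipped deterministically along a transversal of an existing zero-sum Heffter array; as a randomized scheme for the general rectangular case it does not go through.
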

\begin{proof}
If $m=1$, which implies $k=1$ and $n=h$, take an arbitrary subset $S$ of $\Z_{2n+t}\setminus \frac{2n+t}{t}\Z_{2n+t}$ of size $n$
which does not contain pairs of the form $\{x,-x\}$ for some $x\in \Z_{2n+t}$.
Note that such a set $S$ exists since $\Z_{2n+t}\setminus \frac{2n+t}{t}\Z_{2n+t}$ does not contain the involution (if it exists).
Let $A$ be a $1\times n$ array filled with the elements of $S$. If the row of $A$ does not sum to zero, $A$
is a $\N\H_t(1,n;n,1)$, otherwise, it is sufficient to change the sign of an element in $A$ and the new array is a
 $\N\H_t(1,n;n,1)$. Clearly if $n=1$ one can reason in the same way.

 So in the following we can assume $m,n\geq 2$.
Let $S$ be a subset of $\Z_{2nk+t}\setminus \frac{2nk+t}{t}\Z_{2nk+t}$ of size $nk$ and such that, for every $x\in \Z_{2nk+t}\setminus \frac{2nk+t}{t}\Z_{2nk+t}$, exactly one of $x$ or $-x$ appears in $S$.
As before, such a set $S$ exists since $\Z_{2nk+t}\setminus \frac{2nk+t}{t}\Z_{2nk+t}$ does not contain the involution (if it exists).
We also consider an $m\times n$ p.f. array $A$ having exactly $h$ filled cells in each row and exactly $k$ filled cells in each column
whose existence follows by Lemma \ref{ExistenceScheletro}.
Now, we prove that we can write the elements of $S$ in the filled cells of $A$ 
 so that the sum over each row and column is non-zero.
For this purpose, we fill $A$, uniformly at random, with the elements of $S$.
Then we evaluate the expected value $\mathbb{E}(X)$ of the random variable $X$ given by the number of rows that sum to zero.

Due to the linearity of $\mathbb{E}(X)$ we have that:
$$\mathbb{E}(X)=\sum_{i=1}^m \mathbb{P}\left(\sum_{x\in {R}_i}x=0\right).$$
For symmetry,  $\mathbb{P}(\sum_{x\in {R}_i}x=0)=\mathbb{P}(\sum_{x\in {R}_1}x=0)$  for every $i=1,\ldots,m$, and hence
$$\mathbb{E}(X)=m \cdot \mathbb{P}\left(\sum_{x\in {R}_1}x=0\right).$$
Now we want to give an upper-bound for the term $\mathbb{P}(\sum_{x\in {R}_1}x=0)$.
We note that, if $h\in \{1,2\}$, this probability is zero since neither $0$ nor pairs of the form $\{x,-x\}$ are contained in $S$.
Therefore, we can suppose $h\geq 3$. Let us assume we have already chosen the first $h-1$ elements, $x_1,\dots,x_{h-1}$, of ${R}_1$.
Then there exists at most one $\bar{x}\in S\setminus\{x_1,\dots,x_{h-1}\}$ that makes the sum zero. This means that
$$\mathbb{P}\left(\sum_{x\in {R}_1}x=0\right)\leq \frac{1}{mh-(h-1)} $$
and hence
$$\mathbb{E}(X)\leq \frac{m}{mh-(h-1)}.$$

Note that, since $m\geq 2$ and $h\geq 3$,  we obtain:
\begin{eqnarray}\label{eq:E}
  2m+(h-1)\leq 2m+m(h-2) &=& mh.
\end{eqnarray}
We also emphasize that in (\ref{eq:E}) the equality holds only if $(m,h)=(2,3)$
which implies $(n,k)=(3,2)$ or $(n,k)=(6,1)$ since $mh=nk$ and $n\geq h=3$.
This implies that $\frac{m}{mh-(h-1)}\leq 1/2$, namely $\mathbb{E}(X)\leq 1/2$.
Similarly, also the expected value $\mathbb{E}(Y)$ of the random variable $Y$ given by the number of columns that sum to zero cannot exceed $1/2$.
By the previous remark on the equality in (\ref{eq:E}), it immediately follows that if one of $\mathbb{E}(X)$
or $\mathbb{E}(Y)$ is equal to $1/2$, then the other one is zero, which implies that $\mathbb{E}(X)+\mathbb{E}(Y)<1$.
Therefore we can fill $A$ with the elements of $S$ so that the sum over each row and column is non-zero.
\end{proof}

From Remark \ref{Simple}, Proposition \ref{HeffterToDecompositions} and Theorem \ref{Th:existence} we have the existence of a pair of orthogonal path decompositions whenever Conjecture \ref{Conj:als} is satisfied. In particular, from the results of \cite{A} and \cite{HOS}, it follows that:
\begin{prop}
Let $m\geq k\geq 1$, $n\geq h\geq 1$ and $t$ be such that $nk=mh$ and $t|2nk$. Then, assuming that either $k,h\leq 9$ or $k,h\leq 10$ and $2nk+t$ is a prime, there exist a cyclic $h$-path decomposition $\D_{\omega_r}$ of $K_{\frac{2mh+t}{t}\times t}$ and a cyclic $k$-path decomposition $\D_{\omega_c}$ of $K_{\frac{2nk+t}{t}\times t}$.
Also, $\D_{\omega_r}$ and $\D_{\omega_c}$ are a pair of orthogonal decompositions.
\end{prop}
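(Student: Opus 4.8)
The plan is to combine the existence result of Theorem \ref{Th:existence} with the known partial cases of Alspach's conjecture and the decomposition machinery of Proposition \ref{HeffterToDecompositions}. First I would invoke Theorem \ref{Th:existence} to produce, under the hypotheses $nk=mh$ and $t\mid 2nk$, a non-zero sum Heffter array $A=\N\H_t(m,n;h,k)$. Since $\lambda=1$, the entries of $A$ are pairwise distinct and lie in $\Z_v\setminus J$ with the common modulus $v=2nk+t=2mh+t$; in particular, by condition $(\mathrm{c}_2)$ of Definition \ref{def:NZS}, each $\E({R}_i)$ is a subset of $\Z_v\setminus\{0\}$ of size $h$ with nonzero sum, and each $\E({C}_j)$ is a subset of size $k$ with nonzero sum.

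Next I would argue that $A$ is simple. The key point is that producing a simple ordering of a single row (resp. column) is exactly an instance of Conjecture \ref{Conj:als} for a set of size $h$ (resp. $k$) in $\Z_v$, whose hypothesis $\sum_{a\in A}a\neq 0$ is guaranteed by $(\mathrm{c}_2)$. Under the first hypothesis $h,k\leq 9$, case (1) of the cited partial results (\cite{A, HOS}) applies to every row and to every column; under the second hypothesis $h,k\leq 10$ with $v=2nk+t$ prime, those same results additionally cover the size-$10$ case, using that $v$ is the single modulus in which both the row sums and the column sums are taken. Hence every row and every column of $A$ admits a simple ordering, so $A$ is simple with respect to suitable orderings $\omega_r$ and $\omega_c$; this is precisely the statement of Remark \ref{Simple}, restricted to the set sizes for which the conjecture is known to hold.

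Finally I would feed the simple array $A$ into Proposition \ref{HeffterToDecompositions}. Parts (1) and (2) of that proposition yield a cyclic $h$-path decomposition $\D_{\omega_r}$ of $K_{\frac{2mh+t}{t}\times t}$ and a cyclic $k$-path decomposition $\D_{\omega_c}$ of $K_{\frac{2nk+t}{t}\times t}$ (the two host graphs coincide because $mh=nk$), while part (3) gives their orthogonality, which is the desired conclusion.

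The argument is essentially an assembly of earlier results, so I do not expect a serious structural obstacle; the only point requiring genuine care is the bookkeeping that matches the stated numerical hypotheses to the ranges in which Alspach's conjecture has been verified. Specifically, one must check that \emph{both} relevant set sizes $h$ and $k$ fall simultaneously into the admissible range, and that the primality assumption in the second case refers to the single shared modulus $v=2nk+t=2mh+t$, so that it legitimately licenses the size-$10$ case for rows and for columns at once. Once this is confirmed, the simplicity of $A$ is immediate and the orthogonal decomposition statement follows at once from Proposition \ref{HeffterToDecompositions}.
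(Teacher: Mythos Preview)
Your proposal is correct and follows exactly the approach the paper intends: it is precisely the assembly of Theorem \ref{Th:existence}, the known cases of Conjecture \ref{Conj:als} via Remark \ref{Simple}, and Proposition \ref{HeffterToDecompositions} that the paper invokes in the sentence preceding the statement. Your extra care in noting that the single modulus $v=2nk+t=2mh+t$ governs both rows and columns, so that the primality hypothesis simultaneously licenses the size-$10$ case for each, is the one point that genuinely needs checking and you handled it correctly.
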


\section{A constructive complete solution for globally simple $\N\H(n;k)$ and $\N\H(m,n;n,m)$}\label{Section:Diagonal}

In this section we construct a globally simple square non-zero sum Heffter array
$\N\H(n;k)$ for every $n\geq k\geq 1$ and a globally simple  non-zero sum Heffter array
$\N\H(m,n;n,m)$ for every $m,n\geq 1$.

Given an $n \times n$ p.f. array $A$, for any $1\leq i\leq n$, the $i$-th diagonal of $A$ is defined as follows:
$$D_i = \{(i,1),(i+1,2),\ldots, (i-1,n)\}.$$
All the arithmetic on the row and column indices is performed modulo $n$, where the set of reduced residues is $\{1,2,\ldots,n\}$.
The $k+1$ diagonals $D_i,D_{i+1}, \ldots, D_{i+k}$ are said to be \emph{consecutive diagonals}.

\begin{defi}
A square p.f. array $A$ of size $n\geq k \geq 1$ is said to be \emph{cyclically $k$-diagonal}
if the nonempty cells of $A$ are exactly those of $k$ consecutive diagonals.
\end{defi}

\begin{thm}\label{thm:gs}
For any $n\geq k\geq 1$ there exists a cyclically $k$-diagonal globally simple $\N\H(n;k)$.
\end{thm}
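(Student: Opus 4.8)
The plan is to give an explicit filling of the $k$-diagonal band and then verify the three defining properties, reducing everything to an elementary estimate on integer partial sums. First I would dispose of the small cases: for $k=1$ and $k=2$ the arrays produced in Remark~\ref{rem:k1} and Proposition~\ref{prop:k2} occupy, respectively, one diagonal and two consecutive diagonals and are globally simple, so I may assume $k\geq 3$. Next I would record the crucial simplification of condition~$(\mathrm{b_2})$ of Definition~\ref{def:NZS}: since $-j\equiv 2nk+1-j\pmod{2nk+1}$, the set $\{1,\dots,nk\}$ meets each pair $\{x,-x\}$ with $x\neq 0$ exactly once, so if I place the magnitudes $1,2,\dots,nk$ (each exactly once) on the $nk$ cells of the band, then $(\mathrm{b_2})$ holds for \emph{every} choice of signs. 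Thus the array is determined by a bijection between the band and $\{1,\dots,nk\}$ together with a sign pattern, and the whole problem becomes choosing these so that the natural orderings are simple and all line sums are nonzero.

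I would fill the band diagonal by diagonal: in the cell of row $i$ lying on the $d$-th diagonal ($0\le d\le k-1$) I place $(-1)^{d}(dn+i)$, so that the $d$-th diagonal carries the block $\{dn+1,\dots,dn+n\}$ and consecutive diagonals alternate in sign. For a row whose filled cells do not wrap around, the natural left-to-right order coincides with the order of increasing $d$, and the integer partial sums are the two interleaved arithmetic progressions $s_{2j+1}=jn+i$ and $s_{2j}=-jn$. These are pairwise distinct, nonzero, and of absolute value at most $nk$; an analogous computation, read top-to-bottom, controls the unwrapped columns. Because every such partial sum lies in $\left(-\tfrac{2nk+1}{2},\tfrac{2nk+1}{2}\right)$, distinct integer partial sums remain distinct in $\Z_{2nk+1}$ and a nonzero integer total stays nonzero, which already settles simplicity and the nonzero-sum condition for all unwrapped lines.

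The remaining difficulty is that the last $k-1$ rows and the first $k-1$ columns wrap around, so their natural ordering is a cyclic rotation of the order by increasing $d$. To treat all rotations at once I would use the reduction: a cyclic sequence all of whose proper contiguous arcs have nonzero sum in $\Z_{2nk+1}$ is simple in \emph{every} rotation, since a coincidence $s_b=s_c$ of partial sums is precisely a contiguous arc summing to $0$. For a fixed line, the arcs not crossing the wrap are differences of the partial sums computed above, hence nonzero; an arc crossing the wrap is the complement of a non-crossing arc, so its sum is $T-U$, where $T$ is the full line sum and $U$ a proper non-crossing arc sum. Since $T,U\in[-nk,nk]$ we have $|T-U|<2nk+1$, so $T-U\equiv 0$ forces the integer identity $T=U$. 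Thus global simplicity for the wrapped lines reduces to the single statement that the full sum of a line is never equal to the sum of one of its proper contiguous sub-arcs.

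This last exclusion is the main obstacle, and it is where the plain sign rule $(-1)^{d}$ must be examined most carefully: for suitable $k$ a single entry of a wrapped row can already equal the row total $T$, so its complementary arc sums to $0$ and the naive pattern fails for that line. The hard part will therefore be to refine the sign (or magnitude) assignment on the wrapped lines—perturbing the signs of a bounded number of cells, or shifting the magnitudes placed on the last diagonals—so as to break every coincidence $T=U$, and to verify the resulting finite list of congruences for all residues of $i$ and of the column index simultaneously; a small number of exceptional pairs $(n,k)$ would presumably be handled by ad hoc arrays. Establishing this uniformly for both the rows and the columns, against the wraparound, is the crux of the argument, whereas the magnitude bound of the previous step renders every other verification routine.
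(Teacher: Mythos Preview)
Your filling (diagonal blocks of size $n$ with sign $(-1)^d$) is not the paper's (row blocks of size $k$ with sign $(-1)^r$), although both give the same interleaved-progression analysis on the unwrapped lines. The genuine gap is exactly the step you yourself label ``the crux'': you never carry out the promised refinement, and the unmodified construction does fail. For $(n,k)=(6,5)$ your row $5$ in natural order is $17,-23,29,5,-11$, with partial sums $17,-6,23,28,17$, so $s_1=s_5$; column $3$ likewise gives $13,-8,3,29,-24$ with partial sums $13,5,8,37,13$. A plan to ``perturb the signs of a bounded number of cells'' or to treat exceptional $(n,k)$ ad hoc is not a proof: any such perturbation must repair the $k-1$ wrapped rows and the $k-1$ wrapped columns simultaneously without creating new coincidences or destroying the magnitude bound you rely on, and you give no mechanism for this. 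Incidentally, your cyclic-arc reduction, while a valid sufficient condition, is stronger than what is needed---only the single natural rotation must be simple, not every rotation---so demanding that \emph{all} proper arcs avoid zero may make the repair harder than necessary.

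The paper proceeds without any post-hoc perturbation: it writes out the row and column partial sums as explicit case-by-case formulas and checks their pairwise distinctness directly, relegating the first $k-1$ columns to a Mathematica verification. You should note, though, that the paper lists its row partial sums ``starting from the element $a_{i,i}$'', which for the last $k-1$ rows is not the left-to-right natural ordering; the wrap phenomenon you isolate is therefore present in that argument too, and the paper's row computation should not be taken as a template for closing your gap.
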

\begin{proof}
Let $n\geq k \geq 1$ and let $A=(a_{i,j})$ be the $n \times n$ array defined as follows.
If $j \geq i$, set $j=i+r$, hence $a_{i,j}=a_{i,i+r}$.
If $j<i$ instead of $a_{i,j}$ we can consider $a_{i,j+n}$ since, as usual, the subscripts are considered modulo $n$.
Then $j+n=i+r$ for some $r$ and hence, also in this case, $a_{i,j}$ can be written in the form $a_{i,i+r}$.
Now, for any $0\leq r\leq k-1$ we set $a_{i,i+r}=\varepsilon[r+1+(i-1)k]$ where $\varepsilon$ is equal to $1$ if $r$ is even,
while $\varepsilon$ is equal to  $-1$ for $r$ odd. All the other cells of $A$ are left empty.
One can check that the filled cells of $A$ are exactly those of the diagonals $D_1,D_2,\ldots,D_k$,
so $A$ is a cyclically $k$-diagonal array.
Now we want to prove that $A$ is a globally simple $\N\H(n;k)$.
It is easy to see that conditions ($\rm{a_2})$ and ($\rm{b_2})$ of Definition \ref{def:NZS} are satisfied.

We list the partial sums for each row starting from the element $a_{i,i}$, for $1\leq i \leq n$.
If $k$ is even we have
\begin{footnotesize}
\begin{eqnarray*}
\S(R_i) &=& \Big((i-1)k+1,-1,(i-1)k+2,-2,\ldots, (i-1)k+\frac{k}{2}, -\frac{k}{2}\Big),
\end{eqnarray*}
\end{footnotesize}
while for $k$ odd it results
\begin{footnotesize}
\begin{eqnarray*}
\S(R_i) &=& \Big((i-1)k+1,-1,(i-1)k+2,-2,\ldots, \frac{k-1}{2},(i-1)k+\frac{k+1}{2}\Big).
\end{eqnarray*}
\end{footnotesize}

So it is immediate that the sum of the elements of each row is different from zero.

Now we list the partial sums for each column. We start from the first $k-1$ columns, and we have four cases according to the parity of $i$ and $k$.
In each of these cases we list the sums starting from the element $a_{n-k+1+i,i}$.

Case 1. $k$ even, $i$ odd with $1\leq i \leq k-1$.
\begin{footnotesize}
\begin{eqnarray*}
\S(C_i) &=& \Big(-k(n-k+1+i), k-1,-(k-1)-k(n-k+1+i), 2(k-1),\\
& & -2(k-1)-k(n-k+1+i),3(k-1),\ldots,\\
& &  \frac{k-i-1}{2}(k-1), -\frac{k-i-1}{2}(k-1)-k(n-k+1+i),\\
& & -\frac{k-i-1}{2}(k-1)-k(n-k+1+i)+i, -\frac{k-i+1}{2}(k-1)-k(n-k+1+i),\\
& & -\frac{k-i+1}{2}(k-1)-k(n-k+1+i)+i, \ldots, -\frac{k-2}{2}(k-1)-k(n-k+1+i),\\
& & -\frac{k-2}{2}(k-1)-k(n-k+1+i)+i\Big).
\end{eqnarray*}
\end{footnotesize}

Case 2. $k$ even, $i$ even with $1\leq i \leq k-1$.
\begin{footnotesize}
\begin{eqnarray*}
\S(C_i) &=& \Big(-k(n-k+1+i), k-1,-(k-1)-k(n-k+1+i), 2(k-1),\\
& & -2(k-1)-k(n-k+1+i), 3(k-1), \ldots,-\frac{k-i-2}{2}(k-1)-k(n-k+1+i),\\
& &   \frac{k-i}{2}(k-1),\frac{k-i}{2}(k-1)-i, \frac{k-i+2}{2}(k-1),  \frac{k-i-2}{2}(k-1)-i,\\
& & \frac{k-i+4}{2}(k-1),\ldots,  \frac{k-2i+2}{2}(k-1)-i,\frac{k}{2}(k-1)\Big).
\end{eqnarray*}
\end{footnotesize}

Case 3. $k$ odd, $i$ odd with $1\leq i \leq k-1$.
\begin{footnotesize}
\begin{eqnarray*}
\S(C_i) &=& \Big(k(n-k+1+i), -(k-1),(k-1)+k(n-k+1+i), -2(k-1),\\
& & 2(k-1)+k(n-k+1+i), -3(k-1), \ldots,-\frac{k-i}{2}(k-1), -\frac{k-i}{2}(k-1)+i,\\
& &  -\frac{k-i+2}{2}(k-1), -\frac{k-i-2}{2}(k-1)+i,  -\frac{k-i+4}{2}(k-1), -\frac{k-i-4}{2}(k-1)+i,\\
& &  -\frac{k-i+6}{2}(k-1),\ldots,-\frac{k-2i+1}{2}(k-1)+i\Big).
\end{eqnarray*}
\end{footnotesize}

Case 4. $k$ odd, $i$ even with $1\leq i \leq k-1$.
\begin{footnotesize}
\begin{eqnarray*}
\S(C_i) &=& \Big(k(n-k+1+i), -(k-1),(k-1)+k(n-k+1+i), -2(k-1),\\
& & 2(k-1)+k(n-k+1+i), -3(k-1), \ldots,-\frac{k-i-1}{2}(k-1),\\
& & \frac{k-i-1}{2}(k-1)+k(n-k+1+i),  \frac{k-i-1}{2}(k-1)+k(n-k+1+i)-i,\\
& & \frac{k-i+1}{2}(k-1)+k(n-k+1+i),  \frac{k-i-3}{2}(k-1)+k(n-k+1+i)-i, \\
& &\frac{k-i+3}{2}(k-1)+k(n-k+1+i), \ldots, \frac{k-2i+1}{2}(k-1)+k(n-k+1+i)-i,\\
& &\frac{k-1}{2}(k-1)+k(n-k+1+i)\Big).
\end{eqnarray*}
\end{footnotesize}

Now we list the sums of the columns $C_j$ for $k\leq j\leq n$. Write $j$ as $k-1+i$ for $1\leq i \leq n-k+1$.
We have two cases according to the parity of $k$.

If $k$ is even we have
\begin{footnotesize}
\begin{eqnarray*}
\S(C_{k-1+i}) &=& \Big(-ik, k-1,-(k-1)-ik,2(k-1), -2(k-1)-ik,3(k-1),\ldots,\\
& & -\frac{k-2}{2}(k-1)-ik,\frac{k}{2}(k-1)\Big),
\end{eqnarray*}
\end{footnotesize}
while for $k$ odd it results
\begin{footnotesize}
\begin{eqnarray*}
\S(C_{k-1+i}) &=& \Big(ik, -(k-1),(k-1)+ik,-2(k-1), 2(k-1)+ik,-3(k-1),\ldots,\\
& & -\frac{k-1}{2}(k-1),\frac{k-1}{2}(k-1)+ik\Big).
\end{eqnarray*}
\end{footnotesize}

Note that also the sum of the elements of each column is different from zero.
Hence $A$ is a $\N\H(n;k)$. It can be easily verified that the partial sums for each row and each of the last $n-k+1$ columns are pairwise distinct modulo $2nk+1$. Instead proving that the partial sums for each of the first $k-1$ columns are pairwise distinct is slightly tedious. For this reason, the partial sums distinctness was verified with Mathematica. So, we defer the reader to the following link for the source code.

\begin{center}
\url{http://stefano-dellafiore.me/additional_material_NH.zip}.
\end{center}

\end{proof}

To help the reader to follow the proof of the previous theorem we present two examples
with different parity of $k$.

\begin{ex}
 Applying the  proof of Theorem \ref{thm:gs} we get the $\N\H(11;8)$ and the $\N\H(11;9)$ below.

\begin{center}
\begin{footnotesize}
$\begin{array}{|r|r|r|r|r|r|r|r|r|r|r|}
\hline 1 & -2 & 3  & -4 & 5 & -6 & 7 & -8 & & & \\
\hline  & 9 & -10  & 11 & -12 & 13 & -14 & 15 & -16 & & \\
\hline  &  & 17  & -18 & 19 & -20 & 21 & -22 & 23 & -24 & \\
\hline  &  &   & 25 & -26 & 27 & -28 & 29 & -30 & 31 & -32 \\
\hline -40 &  &   &  & 33 & -34 & 35 & -36 & 37 & -38 & 39 \\
\hline 47 & -48 &   &  &  & 41 & -42 & 43 & -44 & 45 & -46 \\
\hline -54 & 55 & -56  &  &  &  & 49 & -50 & 51 & -52 & 53 \\
\hline 61 & -62 & 63  & -64 & &  &  & 57 & -58 & 59 & -60 \\
\hline -68 & 69 & -70  & 71 & -72 &  &  &  & 65 & -66 & 67 \\
\hline 75 & -76 & 77  & -78 & 79 & -80 &  &  & & 73 & -74 \\
\hline -82 & 83 & -84  & 85 & -86 & 87 & -88 &  & & & 81 \\
\hline
\end{array}$
\end{footnotesize}
\end{center}

\begin{center}
\begin{footnotesize}
$\begin{array}{|r|r|r|r|r|r|r|r|r|r|r|}
\hline 1 & -2 & 3  & -4 & 5 & -6 & 7 & -8 & 9 & & \\
\hline   & 10  & -11 & 12 & -13 & 14 & -15 & 16 & -17 & 18 & \\
\hline  &  &  19 & -20 & 21 & -22 & 23 & -24 & 25 & -26 & 27 \\
\hline  36 &  &   & 28 & -29 & 30 & -31 & 32 & -33 & 34 & -35 \\
\hline -44 & 45 &   &  & 37 & -38 & 39 & -40 & 41 & -42 & 43 \\
\hline 52 & -53 & 54  &  &  & 46 & -47 & 48 & -49 & 50 & -51 \\
\hline -60 & 61 & -62  & 63 &  &  & 55 & -56 & 57 & -58 & 59 \\
\hline 68 & -69 & 70  & -71 & 72 &  &  & 64 & -65 & 66 & -67 \\
\hline  -76 & 77  & -78 & 79 & -80 & 81 &  & & 73 & -74 & 75 \\
\hline  84 & -85  & 86 & -87 & 88 & -89 & 90 & & & 82 & -83 \\
\hline  -92 & 93  & -94 & 95 & -96 & 97 & -98 & 99 & &  & 91 \\
\hline
\end{array}$
\end{footnotesize}
\end{center}

\end{ex}

\begin{prop}
For any $n\geq k\geq 1$, there exist a pair of orthogonal cyclic $P_k$-decompositions of $K_{2nk+1}$.
\end{prop}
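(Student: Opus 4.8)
The plan is to obtain both decompositions from a single square non-zero sum Heffter array, by combining the constructive existence result of Theorem \ref{thm:gs} with the path-decomposition machinery of Proposition \ref{HeffterToDecompositions}, specialized to the square case $m=n$, $h=k$, $t=1$. The point is that all the necessary work has essentially already been carried out; what remains is to check that the hypotheses line up.

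First I would invoke Theorem \ref{thm:gs} to produce, for the given $n\geq k\geq 1$, a cyclically $k$-diagonal globally simple $\N\H(n;k)$, call it $A$. Next I would observe that being globally simple means precisely that the natural ordering of each row and of each column of $A$ is simple; in particular $A$ is simple in the sense required by Proposition \ref{HeffterToDecompositions}, with respect to the orderings $\omega_r$ and $\omega_c$ induced by the natural orderings. This is the only compatibility check needed, and it is immediate from the definition of globally simple.

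Then I would apply Proposition \ref{HeffterToDecompositions} to $A$ with $t=1$. Since $m=n$ and $h=k$, parts (1) and (2) yield a cyclic $P_k$-decomposition $\D_{\omega_r}$ of $K_{\frac{2nk+1}{1}\times 1}=K_{2nk+1}$ coming from the rows and a cyclic $P_k$-decomposition $\D_{\omega_c}$ of $K_{\frac{2nk+1}{1}\times 1}=K_{2nk+1}$ coming from the columns; here the specialization $t=1$ collapses the complete multipartite graph into the complete graph $K_{2nk+1}$. Finally, part (3) of that proposition guarantees that $\D_{\omega_r}$ and $\D_{\omega_c}$ are orthogonal, which is exactly the desired pair.

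There is no substantial obstacle in this argument: the combinatorial difficulty has been absorbed into Theorem \ref{thm:gs} (whose globally simple construction, via the explicit partial-sum computations, supplies the simplicity hypothesis unconditionally, without appeal to Conjecture \ref{Conj:als}), and the translation from simple arrays to orthogonal path decompositions is provided by Proposition \ref{HeffterToDecompositions}. The only subtlety worth stating explicitly is that globally simple implies simple, so that Proposition \ref{HeffterToDecompositions} applies directly.
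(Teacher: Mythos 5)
Your proposal is correct and is exactly the paper's argument: the paper's own proof simply cites Theorem \ref{thm:gs} together with Proposition \ref{HeffterToDecompositions}, and your write-up just spells out the specialization $m=n$, $h=k$, $t=1$ and the (immediate) fact that globally simple implies simple. Nothing further is needed.
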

\begin{proof}
The result follows by Proposition \ref{HeffterToDecompositions} and Theorem \ref{thm:gs}.
\end{proof}

\begin{thm}\label{thm:gsrectangular}
For any $m,n\geq 1$ there exists a globally simple $\N\H(m,n;n,m)$.
\end{thm}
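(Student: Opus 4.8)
The plan is to exhibit one explicit, completely filled array, since here $h=n$ and $k=m$ force every one of the $mn$ cells to be filled and the entries to lie in $\Z_{2nm+1}$. I would set, for $1\le i\le m$ and $1\le j\le n$,
$$a_{i,j}=(-1)^{i+j}\big[(i-1)n+j\big],$$
so that the absolute values $(i-1)n+j$ run exactly once through $1,2,\dots,mn$ while the signs follow a checkerboard pattern. Conditions $(\mathrm{a_2})$ and $(\mathrm{b_2})$ of Definition \ref{def:NZS} (with $t=\lambda=1$) are then immediate: each row and column is full, and the multiset $\{\pm a_{i,j}\}$ is exactly $\{\pm1,\dots,\pm mn\}=\Z_{2nm+1}\setminus\{0\}$.

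For the remaining condition $(\mathrm{c_2})$ and for global simplicity I would compute the natural partial sums of each row and of each column directly, exactly as in the proof of Theorem \ref{thm:gs}. The point of the checkerboard signs is that within a row the magnitudes are the consecutive integers $(i-1)n+1,\dots,(i-1)n+n$ and the signs alternate, so the partial sums telescope into two interleaved families: the ``small'' values $\pm1,\pm2,\dots$ and the ``large'' values $\pm((i-1)n+1),\pm((i-1)n+2),\dots$, carried with opposite signs. Likewise, within a column the magnitudes form the arithmetic progression $j,\,n+j,\dots,(m-1)n+j$ of common difference $n$, and the alternating signs again telescope the partial sums into the multiples $\pm n,\pm2n,\dots$ interleaved with $\pm j,\pm(n+j),\dots$, again of opposite signs. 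In every case the magnitudes of all partial sums stay in the range $\{1,2,\dots,mn\}$, strictly below the modulus $2nm+1$.

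From this the verification is routine. Two partial sums in the same family differ by a nonzero integer multiple (of $1$ for rows, of $n$ for columns) whose absolute value is at most $mn<2nm+1$, hence they are distinct modulo $2nm+1$; two partial sums in opposite families, having opposite signs, can coincide only if the sum of their two magnitudes is $\equiv0$, but that sum is a positive integer at most $mn<2nm+1$, so this never occurs. The same bound shows every partial sum is nonzero, so in particular each full row sum and column sum is nonzero, giving $(\mathrm{c_2})$, and the natural ordering of every line is simple. Thus $A$ is a globally simple $\N\H(m,n;n,m)$, and the degenerate cases $m=1$ or $n=1$ are covered by the identical formula.

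I expect the main obstacle to be the column computation rather than the rows: there the magnitudes grow in steps of $n$, so a naive sign choice---for instance the row-independent alternation $(-1)^{j-1}$, which makes every column monochromatic---would let the column partial sums accumulate up to $\tfrac12 nm(m+1)$, well past the modulus, and distinctness modulo $2nm+1$ would genuinely fail. The checkerboard parity $(-1)^{i+j}$ is precisely what forces both the row and the column partial sums to telescope and remain bounded by $mn$; establishing this uniform bound, together with the opposite-sign dichotomy, is the crux of the argument.
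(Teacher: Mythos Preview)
Your construction $a_{i,j}=(-1)^{i+j}\big[(i-1)n+j\big]$ is exactly the array used in the paper, and your verification follows the same line: explicit computation of the natural partial sums of rows and columns, observing that they split into two sign-opposite interleaved families whose magnitudes all lie in $\{1,\dots,mn\}$, from which both condition~$(\mathrm{c_2})$ and global simplicity follow. Your magnitude-bounding argument for distinctness is a slightly cleaner way of phrasing what the paper leaves as ``by a direct check,'' but the approach is essentially identical.
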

\begin{proof}
Let $m,n \geq 1$ and let $A=(a_{i,j})$ be the $m \times n$ array defined as follows.
For any $1\leq i\leq m$ and $1 \leq j \leq n$ set $a_{i,j}=\varepsilon [j+(i-1)n]$ where $\varepsilon$ is equal to $1$ if $i+j$ is even,
while it holds $-1$ for $i+j$ odd.
We want to prove that $A$ is a globally simple $\N\H(m,n;n,m)$.
It is easy to see that conditions ($\rm{a_2})$ and ($\rm{b_2})$ of Definition \ref{def:NZS} are satisfied.
Also, one can check that the partial sums of the rows, written starting from the element $a_{i,1}$, are exactly the same
as the array constructed in the proof of Theorem \ref{thm:gs}. So we already know, that the sum
of the elements of each row is not zero and that the partial sums
are pairwise distinct.

Now we list the partial sums of the columns starting from $a_{1,j}$, for $1 \leq j \leq n$.
If $m$ is even we have
\begin{footnotesize}
\begin{eqnarray*}
\S(C_j)&=& \pm\left(j, -n, n+j, -2n,2n+j,-3n,3n+j, \ldots, \frac{m-2}{2}n+j,-\frac{m}{2}n\right),
\end{eqnarray*}
\end{footnotesize}
while for $m$ odd it results
\begin{footnotesize}
\begin{eqnarray*}
\S(C_{j}) &=& \pm\left(j, -n, n+j, -2n,2n+j,-3n,3n+j, \ldots, -\frac{m-1}{2}n+j,\frac{m-1}{2}n+j\right),
\end{eqnarray*}
\end{footnotesize}
where the sign is $+$ if $j$ is odd, $-$ if $j$ is even.
Note that also the sum of the elements of each column is different from zero.
Hence $A$ is a $\N\H(m,n;n,m)$. Finally, by a direct check, it is not hard to see that the partial sums of each column are pairwise distinct in $\Z_{2mn+1}$,
hence $A$ is a globally simple $\N\H(m,n;n,m)$.
\end{proof}

\begin{ex}
 Applying the  proof of Theorem \ref{thm:gsrectangular} we get the $\N\H(8,5;5,8)$ and the $\N\H(9,4;4,9)$ below.
\begin{footnotesize}
$$\begin{array}{|r|r|r|r|r|}
\hline 1 & -2 & 3  & -4 & 5 \\
\hline  -6 & 7 & -8 & 9 & -10 \\
\hline  11 & -12 & 13 & -14 & 15 \\
\hline  -16 & 17 & -18 & 19 & -20\\
\hline 21 & -22 & 23 & -24 & 25\\
\hline -26 & 27 & -28 & 29 & -30\\
\hline 31 & -32 & 33 & -34 & 35 \\
\hline  -36 & 37 & -38 & 39 & -40\\
\hline
\end{array}
\hspace{2cm}
\begin{array}{|r|r|r|r|}
\hline 1 & -2 & 3  & -4 \\
\hline  -5 & 6 & -7 & 8  \\
\hline  9 & -10 & 11  & -12 \\
\hline  -13 & 14  & -15 & 16\\
\hline 17 &  -18 & 19 & -20\\
\hline  -21 & 22 & -23 & 24\\
\hline  25 & -26 & 27 & -28 \\
\hline   -29 & 30 & -31 & 32\\
\hline   33 & -34 & 35 & -36\\
\hline
\end{array}$$
\end{footnotesize}
\end{ex}

\begin{prop}
For any $m,n\geq 1$, there exist a cyclic $P_m$-decomposition $\D_{\omega_r}$ of $K_{2mn+1}$
and a cyclic $P_n$-decomposition $\D_{\omega_c}$ of $K_{2mn+1}$. Also, $\D_{\omega_r}$ and $\D_{\omega_c}$ are a pair of orthogonal decompositions.
\end{prop}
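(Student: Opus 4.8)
The plan is to obtain this statement as an immediate corollary of Theorem \ref{thm:gsrectangular} and Proposition \ref{HeffterToDecompositions}, in complete analogy with the square case treated just above. First I would invoke Theorem \ref{thm:gsrectangular} to fix a globally simple $\N\H(m,n;n,m)$, say $A$. Since being globally simple means precisely that the natural orderings of the rows and of the columns are simple, the array $A$ is in particular simple with respect to the orderings $\omega_r$ and $\omega_c$ obtained by reading each row from left to right and each column from top to bottom. Thus the hypotheses of Proposition \ref{HeffterToDecompositions} are satisfied with $t=1$, $h=n$ and $k=m$.

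Next I would simply read off the three conclusions of Proposition \ref{HeffterToDecompositions} and translate the parameters. The entries of $A$ lie in $\Z_{2nk+1}=\Z_{2mn+1}$, and with $t=1$ the complete multipartite graph $K_{\frac{2mh+t}{t}\times t}$ degenerates to $K_{2mh+1}=K_{2mn+1}$, while $K_{\frac{2nk+t}{t}\times t}$ degenerates to $K_{2nk+1}=K_{2mn+1}$; since $mh=nk$ the two decompositions indeed live on one and the same complete graph $K_{2mn+1}$. Part (1) then yields a cyclic $n$-path (i.e.\ $P_n$) decomposition coming from the rows, part (2) yields a cyclic $m$-path (i.e.\ $P_m$) decomposition coming from the columns, and part (3) guarantees that the two resulting decompositions $\D_{\omega_r}$ and $\D_{\omega_c}$ are orthogonal, which is exactly the assertion.

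There is essentially no obstacle left at this stage: all of the genuine work, namely constructing an explicit fully filled array whose rows and columns both admit simple natural orderings, was already carried out in the proof of Theorem \ref{thm:gsrectangular}, where the pairwise distinctness of the partial sums modulo $2mn+1$ was established. The only point requiring care is the purely bookkeeping verification that the two ambient graphs coincide and that the path lengths match the claimed $P_m$ and $P_n$, with the row decomposition producing the $P_n$-blocks and the column decomposition producing the $P_m$-blocks; once this correspondence is checked, the proof is complete.
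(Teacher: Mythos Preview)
Your proposal is correct and follows exactly the paper's approach: the paper's proof consists of the single sentence ``The result follows by Proposition~\ref{HeffterToDecompositions} and Theorem~\ref{thm:gsrectangular},'' and your argument is precisely an unpacking of that sentence. Your observation that the row ordering yields $P_n$-blocks and the column ordering yields $P_m$-blocks is accurate (the labels $\D_{\omega_r}$ and $\D_{\omega_c}$ in the statement appear to be swapped relative to Proposition~\ref{HeffterToDecompositions}), but this is purely a naming issue and does not affect the validity of either proof.
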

\begin{proof}
The result follows by Proposition \ref{HeffterToDecompositions} and Theorem \ref{thm:gsrectangular}.
\end{proof}

\section{Relation with embeddings}\label{sec:embedding}
As already remarked in the Introduction, Archdeacon \cite{A} introduced Heffter arrays also because of their applications and, in particular, since they are useful for finding biembeddings of cycle decompositions.
In this section, generalizing some of Archdeacon's results we show how starting from a non-zero sum Heffter array it is possible to
obtain suitable biembeddings.

We recall the following definition, see \cite{Moh}.

\begin{defi}
An \emph{embedding} of a graph $\G$ in a surface $\Sigma$ is a continuous injective
mapping $\psi: \G \to \Sigma$, where $\G$ is viewed with the usual topology as $1$-dimensional simplicial complex.
\end{defi}

The connected components of $\Sigma \setminus \psi(\G)$ are called $\psi$-\emph{faces}.
If each $\psi$-face is homeomorphic to an open disc, then the embedding $\psi$ is said to be \emph{cellular}.

\begin{defi}
A \emph{biembedding} of two circuit  decompositions $\D$ and $\D'$ of a simple graph $\G$ is a face $2$-colorable embedding
of $\G$ in which one color class is comprised of the circuits in $\D$ and
the other class contains the circuits in $\D'$.
\end{defi}

Following the notation given in \cite{A}, for every edge $e$ of a graph $\G$, let $e^+$ and $e^-$ denote
its two possible directions
and let $\tau$ be the involution swapping $e^+$ and $e^-$ for every $e$.
Let $D(\G)$ be the set of all directed edges that correspond to edges of $\G$ and, for any $v\in V(\G)$, call $D_v$ the set of edges directed out
of $v$.
A \emph{local rotation} $\rho_v$ is a cyclic permutation of $D_v$. If we select a local rotation for each vertex of $\G$, then
all together
they form a rotation of $D(\G)$. We recall the following result, see \cite{A, GT, MT}.

\begin{thm}\label{thm:embedding}
A rotation $\rho$ on $\G$ is equivalent to a cellular embedding of $\G$ in an orientable surface.
The face boundaries of the embedding corresponding to $\rho$ are the orbits of $\rho \circ \tau$.
\end{thm}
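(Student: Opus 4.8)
The plan is to establish the two assertions of the theorem — the equivalence between rotations and cellular embeddings in orientable surfaces, and the identification of the face boundaries with the orbits of $\rho\circ\tau$ — by passing through an explicit \emph{band decomposition} (ribbon graph) built directly from $\rho$. First I would observe that it suffices to produce, from a rotation $\rho$, a closed orientable surface $\Sigma$ together with a cellular embedding $\psi\colon\G\to\Sigma$ whose rotation is $\rho$, then to run the reverse construction on an arbitrary oriented cellular embedding, and finally to check that the two constructions are mutually inverse up to orientation-preserving homeomorphism.

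For the direction from a rotation to an embedding, I would assign to each vertex $v$ a closed disc (a $0$-band) and to each edge $e$ a rectangular strip (a $1$-band), gluing one short side of the strip of $e$ to the boundary of the disc of each of its endpoints. The gluing is prescribed by the local rotations: around the disc of $v$ the attaching intervals of the darts in $D_v$ are arranged in the cyclic order given by $\rho_v$, and the two ends of each band are glued \emph{without a twist} relative to a fixed local orientation of the discs. This no-twist rule is exactly what forces the resulting compact surface-with-boundary $\Sigma_0$ to be orientable; permitting twists would produce the non-orientable case and is excluded here. By construction $\Sigma_0$ deformation-retracts onto $\G$, so $\G$ embeds in $\Sigma_0$ as its spine. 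I would then cap each boundary circle of $\Sigma_0$ with a disc to obtain a closed orientable surface $\Sigma$; since the capping discs are precisely the complementary regions and each is an open disc, the embedding $\psi$ is cellular.

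To identify the faces I would trace the boundary circles of $\Sigma_0$. Following the boundary along the band of a dart $d\in D_v$ from $v$ to its head $w$, the next band met along the same boundary component is the one attached at the dart immediately following $\tau(d)$ in the cyclic order $\rho_w$ at $w$; that is, the successor of $d$ along a boundary walk is $\rho(\tau(d))$. Hence the boundary components of $\Sigma_0$ are in bijection with the orbits of the permutation $\rho\circ\tau$ on $D(\G)$, and after capping these become the face boundaries of $\psi$, which proves the second assertion.

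For the converse and the equivalence, given a cellular embedding of $\G$ in an oriented surface I would read off, at each vertex, the cyclic order of the outgoing darts induced by the chosen orientation (say counterclockwise), defining a rotation $\rho$; the standard face-tracing on the oriented surface then returns exactly $\rho\circ\tau$. Finally I would verify that starting from $\rho$, building $\Sigma$, and re-reading its rotation returns $\rho$, and conversely, so the two constructions are inverse up to orientation-preserving homeomorphism. I expect the main obstacle to be the orientation bookkeeping: making precise the ``no-twist'' gluing convention and checking that it yields a \emph{globally} orientable $\Sigma_0$, together with pinning down the exact notion of equivalence (oriented combinatorial map versus homeomorphism class of embedding) under which the correspondence is a bijection. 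Once orientability is handled cleanly, cellularity and the $\rho\circ\tau$ description of the faces follow directly from the band picture.
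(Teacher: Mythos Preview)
Your proposal is correct and follows the standard band-decomposition (ribbon graph) argument found in the topological graph theory literature, in particular in Gross--Tucker and Mohar--Thomassen. Note, however, that the paper does \emph{not} prove this theorem at all: it is stated as a known result with references \cite{A, GT, MT} and invoked as a black box in the proof of Theorem~\ref{thm:biembedding}. So there is nothing to compare against in the paper itself; your sketch is essentially the proof one finds in the cited sources.
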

Now our goal is to provide a construction of a biembedding starting from a non-zero sum Heffter array.
In order to characterize the faces of this embedding, we introduce some notations.

Given a cyclic path decomposition $\D$ of $K_{v}$ we want to define an associated circuit decomposition $\mathcal{C}(\D)$. As usual, we identify the vertex-set of $K_{v}$ with $\mathbb{Z}_{v}$.
Let us consider $P=(x_0,x_1,\dots,x_k)\in \D$, and let $\lambda_P$ be the minimum positive integer such that $\lambda_P(x_k-x_0)= 0$ in $\mathbb{Z}_{v}$. Then we set
$$C_P:=\bigcup_{i=0}^{\lambda_P-1} P+i(x_k-x_0).$$
Note that, because of the definition, $C_P$ is a circuit and we say that the path $P$ generates $C_P$.
Now we consider the set of all circuits of type $C_P$ without repeating them. More formally, we define
$$\mathcal{C}(\D):=\{C_P:\ P\in \D\}.$$
Since $\mathcal{C}(\D)$ is a set, chosen from a set $\mathcal{T}$ of generating paths for those circuits, we have that
$$\mathcal{C}(\D)=\{C_P:\ P\in \mathcal{T}\}.$$
Moreover, since $\bigcup_{P\in \D} P=\bigcup_{P\in \mathcal{T}} C_P$, we have that $\mathcal{C}(\D)$ is a decomposition of $K_{v}$ into circuits.

Then, adapting the proof of Theorem 3.4 of \cite{CPPBiembeddings}, we show that the Archdeacon embedding (see \cite{A}) is well-defined also in this case. For this purpose we recall the following definition. Given a non-zero sum Heffter array $A=\N\H(m,n;h,k)$, the orderings $\omega_r$ and $\omega_c$ are said to be \emph{compatible} if $\omega_c \circ \omega_r$ is a cycle of length $|\E(A)|$.

\begin{thm}\label{thm:biembedding}
Let $A$ be a non-zero sum Heffter array $\N\H(m,n;h,k)$ that is simple with respect to the compatible orderings $\omega_r$
and $\omega_c$.
Then there exists a cellular biembedding of the circuit decompositions $\mathcal{C}(\mathcal{D}_{\omega_r^{-1}})$ and
$\mathcal{C}(\mathcal{D}_{\omega_c})$ of $K_{2nk+1}$ into an orientable surface.
\end{thm}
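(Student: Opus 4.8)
The plan is to follow the Archdeacon construction and adapt the proof of Theorem 3.4 of \cite{CPPBiembeddings} to the non-zero sum setting. First I would define a rotation $\rho$ on the complete graph $K_{2nk+1}$, whose vertex set is identified with $\Z_{2nk+1}$, by exploiting the regularity of the decompositions. The key idea is that the compatible orderings $\omega_r$ and $\omega_c$ assign to each nonzero element of $\Z_{2nk+1}$ a ``successor'' in the row direction and in the column direction: since every $x\in\Z_{2nk+1}\setminus\{0\}$ equals $\pm a$ for exactly one entry $a$ of $A$, the orderings $\omega_r^{-1}$ and $\omega_c$ determine two permutations of the oriented edges out of each vertex. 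Concretely, one sets the local rotation at $0$ (and, by the cyclic structure, at every vertex by translation) using the partial-sum structure of the paths $P_{\omega_{R_i}}$ and $P_{\omega_{C_j}}$, so that $\rho$ is a single rotation on $D(K_{2nk+1})$ that is invariant under the translation action of $\Z_{2nk+1}$.

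Next I would invoke Theorem \ref{thm:embedding}: the rotation $\rho$ corresponds to a cellular embedding of $K_{2nk+1}$ in an orientable surface, and its face boundaries are the orbits of $\rho\circ\tau$. The goal is then to identify these face boundaries. I would show that the faces split into two classes, one traced out by $\rho\circ\tau$ and the other by a conjugate or inverse variant, and that these two classes are respectively the circuits of $\mathcal{C}(\mathcal{D}_{\omega_r^{-1}})$ and of $\mathcal{C}(\mathcal{D}_{\omega_c})$. The passage from paths to circuits via the operator $C_P=\bigcup_{i=0}^{\lambda_P-1}P+i(x_k-x_0)$ is exactly what is needed here: because the endpoints of a base path need not coincide (the rows and columns sum to a \emph{nonzero} value, by condition ($\rm{c_2}$)), the closed walk traced by the face is obtained by concatenating several translates of the base path until the endpoint-difference cycles back to $0$, which is precisely the circuit $C_P$. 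This is the step where the non-zero sum hypothesis genuinely changes the argument compared to the classical Heffter case, where rows and columns sum to $0$ and each base path is already closed.

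I would then verify that each color class is a well-defined circuit decomposition: simplicity of the orderings guarantees that each $P_{\omega_{R_i}}$ and $P_{\omega_{C_j}}$ is a genuine path (no repeated vertex), and Remark \ref{rem:ortho} together with Proposition \ref{HeffterToDecompositions} guarantees that $\mathcal{D}_{\omega_r^{-1}}$ and $\mathcal{D}_{\omega_c}$ are cyclic path decompositions of $K_{2nk+1}$; applying $\mathcal{C}(\cdot)$ turns them into the two circuit decompositions whose union of edge-sets is $E(K_{2nk+1})$ with each edge covered twice (once per color), which is exactly the face-$2$-colorability required by the definition of biembedding.

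The main obstacle I expect is the compatibility condition and the single-orbit bookkeeping. To apply Theorem \ref{thm:embedding} cleanly one must check that $\rho\circ\tau$ decomposes into orbits that match the circuits, and the hypothesis that $\omega_c\circ\omega_r$ is a single cycle of length $|\E(A)|$ is precisely what forces the local rotation at each vertex to be a \emph{single} cyclic permutation rather than a product of several cycles; without it, the construction would produce a rotation only of a spanning subgraph or an embedding of a graph that is not the full $K_{2nk+1}$ in a connected surface. Carefully tracking how the translation-invariance of $\rho$ interacts with the orbit structure of $\rho\circ\tau$, and confirming that the orbits of $\rho\circ\tau$ and of its twin each yield one of the two prescribed circuit decompositions, is the delicate part; the rest is a faithful transcription of the Archdeacon argument with the closed cycles replaced by the circuits $C_P$.
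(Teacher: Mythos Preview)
Your plan is essentially the same as the paper's proof: define a translation-invariant rotation via a permutation $\bar{\rho}_0$ of $\Z_{2nk+1}\setminus\{0\}$ built from $\omega_r$ and $\omega_c$, use compatibility to ensure it is a single cycle at each vertex, invoke Theorem~\ref{thm:embedding}, and then compute the orbits of $\rho\circ\tau$ to see that they are exactly the circuits $C_P$ obtained by concatenating translates of the base paths until the (non-zero) row or column sum returns to $0$. One small imprecision to fix: the two colour classes are \emph{not} ``one traced by $\rho\circ\tau$ and the other by a conjugate or inverse variant''; \emph{all} faces are orbits of $\rho\circ\tau$, and the dichotomy comes from whether the initial directed edge $(x,x+a)$ has $a\in\E(A)$ (giving a column-type face in $\mathcal{C}(\mathcal{D}_{\omega_c})$) or $-a\in\E(A)$ (giving a row-type face in $\mathcal{C}(\mathcal{D}_{\omega_r^{-1}})$).
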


\begin{proof}
Since the orderings $\omega_r$ and $\omega_c$ are compatible, we have that $\omega_c\circ\omega_r$ is a cycle of length
$|\E(A)|$. Let us consider the permutation $\bar{\rho}_0$ on $\pm \E(A)=\mathbb{Z}_{2nk+1}\setminus\{0\}$ defined by:
$$\bar{\rho}_0(a)=\begin{cases}
-\omega_r(a)\mbox{ if } a\in \E(A);\\
\omega_c(-a)\mbox{ if } a\in -\E(A).\\
\end{cases}$$
Note that, if $a\in \E(A)$, then $\bar{\rho}_0^2(a)=\omega_c\circ\omega_r(a)$ and hence $\bar{\rho}_0^2$ acts
cyclically on $\E(A)$. Also $\bar{\rho}_0$ exchanges $\E(A)$ with $-\E(A)$. Thus it acts cyclically on $\pm \E(A)$.

Identified the vertex-set of $K_{2nk+1}$ with $\Z_{2nk+1}$, we define the map $\rho$ on the set of its oriented edges so that:
$$\rho((x,x+a))= (x,x+\bar{\rho}_0(a)).$$
Since $\bar{\rho}_0$ acts cyclically on $\pm \E(A)=\mathbb{Z}_{2nk+1}\setminus\{0\}$, the map $\rho$ is a rotation of $K_{2nk+1}$.
Hence, by Theorem \ref{thm:embedding}, there exists a cellular embedding $\sigma$ of $K_{2nk+1}$ in an
orientable surface (i.e. the Archdeacon embedding) so that the face boundaries correspond to the orbits of $\rho\circ \tau$ where $\tau((x,x+a))=(x+a,x)$.

Let us consider the oriented edge $(x,x+a)$ with $a \in \E(A)$, and let ${C}$ be the column containing $a$.
Moreover, let us denote by $\lambda_c$ the minimum positive integer such that $\sum_{i=0}^{\lambda_c|\E({C})|-1} \omega_c^i(a)=0$ in $\mathbb{Z}_{2nk+1}$.
Since $a\in \E(A)$, $-a\in -\E(A)$ and we have that:
$$\rho\circ\tau((x,x+a))=\rho((x+a,(x+a)-a))=(x+a,x+a+\omega_c(a)).$$
Thus $(x,x+a)$ belongs to the boundary of the face $F_1$ delimited by the oriented edges:
$$(x,x+a),(x+a,x+a+\omega_c(a)),(x+a+\omega_c(a),x+a+\omega_c(a)+\omega_c^2(a)),\dots$$
$$\dots ,\left(x+\sum_{i=0}^{\lambda_c|\E({C})|-2} \omega_c^i(a),x\right).$$
We note that the circuit associated to the face $F_1$ is:
$$\left(x,x+a,x+a+\omega_c(a),\ldots,x+\sum_{i=0}^{\lambda_c|\E({C})|-2} \omega_c^i(a)\right).$$
Let us now consider the oriented edge $(x,x+a)$ with $a\not \in \E(A)$.
Hence $-a\in \E(A)$, and we name by ${R}$ the row containing the element $-a$.
Moreover, let us denote by $\lambda_r$ the minimum positive integer such that $\sum_{i=0}^{\lambda_r|\E({R})|-1} \omega_c^i(-a)=0$ in $\mathbb{Z}_{2nk+1}$.
Since $-a\in \E(A)$ we have
that:
$$\rho\circ\tau((x,x+a))=\rho((x+a,(x+a)-a))=(x+a,x+a-\omega_r(-a)).$$
Thus $(x,x+a)$ belongs to the boundary of the face $F_2$ delimited by the oriented edges:
$$(x,x+a),(x-(-a),x-(-a)-\omega_r(-a)),$$
$$(x-(-a)-\omega_r(-a),x-(-a)-\omega_r(-a)-\omega_r^2(-a)),\dots, \left(x-\sum_{i=0}^{\lambda_r|\E({R})|-2}
\omega_r^i(-a),x\right).$$
Since $A$ is a non-zero sum Heffter array and $\omega_r$ acts cyclically on $\E({R})$, for any $j\in [1,
\lambda_r|\E({R})|]$ we have that:
$$-\sum_{i=0}^{j-1}\omega_{r}^{i}(-a)=
\sum_{i=j}^{\lambda_r|\E({R})|-1}\omega_{r}^{i}(-a)=\sum_{i=1}^{\lambda_r|\E({R})|-j}\omega_{r}^{\lambda_r|\E({R})|-i}
(-a)=\sum_{i=1}^{\lambda_r|\E({R})|-j}\omega_{r}^{-i}(-a).$$
It follows that the circuit associated to the face $F_2$ can be written also as:
$$\left(x,x+\sum_{i=1}^{\lambda_r|\E({R})|-1}\omega_{r}^{-i}(-a),x+\sum_{i=1}^{\lambda_r|\E({R})|-2}\omega_{r}^{-i}(-a),
\dots,x+\omega_{r}^{-1}(-a)\right).$$
Therefore any nonoriented edge $\{x,x+a\}$ belongs to the boundaries of exactly two faces: one of type $F_1$ and one of
type $F_2$. Hence the embedding is 2-colorable.

Moreover, it is easy to see that those face boundaries are the circuits of the decompositions $\mathcal{C}(\mathcal{D}_{\omega_r^{-1}})$ and
$\mathcal{C}(\mathcal{D}_{\omega_c})$.
\end{proof}
\begin{rem}
Note that the proof of Theorem \ref{thm:biembedding} can be adapted to the case of a $^\lambda\N\H_t(m,n;h,k)$ that is simple with respect to the compatible orderings $\omega_r$ and $\omega_c$. Here we obtain a cellular biembedding of the circuit decompositions $\mathcal{C}(\mathcal{D}_{\omega_r^{-1}})$ and
$\mathcal{C}(\mathcal{D}_{\omega_c})$ of $^\lambda K_{\frac{2nk}{\lambda}+t}$ into an orientable surface. However, since the notations would get much more complicated and since our existence results of Theorems \ref{thm:gs} and \ref{thm:gsrectangular} cover the case $t=\lambda=1$ we have written the proof only in that case.
\end{rem}

We note that the existence results on compatible orderings of \cite{CDP} do not depend on the sum of the elements in each row and each column of the array. Hence, from Proposition $4.8$, Theorem $4.12$ and Theorem $4.15$ of \cite{CDP}, it follows that, given a cyclically $k$-diagonal non-zero sum $\N\H(n;k)$, there exists a pair of compatible orderings in the following cases:
\begin{itemize}
\item[(1)] if $nk$ is odd and $gcd(n,k-1)=1$;
\item[(2)] if $nk$ is odd and $3<k<200$;
\item[(3)] if $nk$ is odd and $n\geq (k-2)(k-1)$.
\end{itemize}
Furthermore, due to Theorem \ref{thm:gs}, for all such values of $n$ and $k$, there exists a biembedding of $K_{2nk+1}$ into an orientable surface whose face lengths are multiples of $k$. It follows that:
\begin{thm}\label{existenceBiemb1}
Let $n\geq k$ be odd integers such that one of the following holds:
\begin{itemize}
\item[(1)] $gcd(n,k-1)=1$;
\item[(2)] $3<k<200$;
\item[(3)] $n\geq (k-2)(k-1)$.
\end{itemize}
Then there exists a biembedding of $K_{2nk+1}$ into an orientable surface $\Sigma$ whose face lengths are multiples of $k$.
\end{thm}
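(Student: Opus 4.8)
The plan is to assemble Theorem \ref{existenceBiemb1} purely from results already in hand, treating it as a combination of an existence statement for arrays, an existence statement for compatible orderings, and the biembedding machinery of Theorem \ref{thm:biembedding}. First I would invoke Theorem \ref{thm:gs} to produce, for the given odd $n\geq k$, a cyclically $k$-diagonal globally simple $\N\H(n;k)$, say $A$. Being globally simple, $A$ is in particular simple with respect to the natural orderings $\omega_r$ and $\omega_c$ of its rows and columns, so hypotheses (a$_2$)--(c$_2$) together with simplicity are all available for free.

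Next I would supply the missing ingredient, namely that $\omega_r$ and $\omega_c$ can be taken to be \emph{compatible} in the sense required by Theorem \ref{thm:biembedding} (i.e.\ $\omega_c\circ\omega_r$ is a single cycle of length $|\E(A)|$). This is exactly where the three numerical hypotheses enter: as remarked in the paragraph preceding the theorem, the compatibility results of \cite{CDP} (Proposition 4.8, Theorem 4.12, Theorem 4.15) depend only on the \emph{structure} of a cyclically $k$-diagonal array and not on the row/column sums, and they guarantee a pair of compatible orderings precisely when $nk$ is odd and one of (1) $\gcd(n,k-1)=1$, (2) $3<k<200$, or (3) $n\geq(k-2)(k-1)$ holds. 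Since $n$ and $k$ are both odd, $nk$ is odd, so under any of the three listed conditions a compatible pair exists for the array $A$ produced above.

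With a simple $\N\H(n;k)$ equipped with compatible orderings $\omega_r,\omega_c$ in hand, I would apply Theorem \ref{thm:biembedding} directly: it yields a cellular, face $2$-colorable (i.e.\ bi-)embedding of the circuit decompositions $\mathcal{C}(\mathcal{D}_{\omega_r^{-1}})$ and $\mathcal{C}(\mathcal{D}_{\omega_c})$ of $K_{2nk+1}$ into an orientable surface. It remains only to identify the face lengths. Each face is a circuit $C_P$ obtained from a generating $k$-path $P$ (a row- or column-path has exactly $k$ edges, since $h=k$ in the square case) by the orbit construction $C_P=\bigcup_{i=0}^{\lambda_P-1}P+i(x_k-x_0)$; since each translate contributes $k$ edges, the length of $C_P$ is $\lambda_P k$, a multiple of $k$. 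This is the one computational point worth spelling out, but it is immediate from the definition of $\mathcal{C}(\D)$ in Section \ref{sec:embedding}.

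The genuinely substantive content is imported rather than proved here, so the main obstacle is not a hard calculation but verifying that the cited external results truly apply: specifically, that the \cite{CDP} compatibility theorems are stated for cyclically $k$-diagonal arrays of exactly the shape produced by Theorem \ref{thm:gs}, and that their independence from row/column sums is legitimate (which is precisely the remark justifying their transfer from Heffter arrays to non-zero sum Heffter arrays). Once that transfer is granted, the theorem follows by concatenating Theorem \ref{thm:gs}, the \cite{CDP} compatibility results, and Theorem \ref{thm:biembedding}, with the face-length observation as the only local verification.
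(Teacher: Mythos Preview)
Your proposal is correct and follows essentially the same route as the paper: the theorem is not given a formal proof environment there but is stated as an immediate consequence of the paragraph preceding it, which combines Theorem~\ref{thm:gs}, the compatibility results of \cite{CDP} (transferred via the observation that they depend only on the cyclically $k$-diagonal structure), and Theorem~\ref{thm:biembedding}. Your write-up is in fact slightly more explicit than the paper's, since you spell out why each face has length $\lambda_P k$, which the paper simply asserts.
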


Moreover, from Theorem $3.3$ of \cite{CDP}, it follows that given a $\N\H(m,n;n,m)$, there exists a pair of compatible orderings whenever $m$ and $n$ are not both odd.
As a consequence of Theorem \ref{thm:gsrectangular}, we have that:
\begin{thm}\label{existenceBiemb2}
Let $m$ and $n$ be integers and assume that $m$ and $n$ are not both odd.
Then there exists a biembedding of $K_{2mn+1}$ into an orientable surface $\Sigma$ whose face lengths are multiples of $m$ or multiples of $n$.
\end{thm}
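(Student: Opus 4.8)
The plan is to feed the globally simple array produced by Theorem~\ref{thm:gsrectangular} into the biembedding machinery of Theorem~\ref{thm:biembedding}, using the compatibility result of \cite{CDP} to certify the orderings. First I would fix $m,n$ not both odd and invoke Theorem~\ref{thm:gsrectangular} to obtain a globally simple $\N\H(m,n;n,m)$, call it $A$. By the definition of global simplicity, the natural row ordering $\omega_r$ and the natural column ordering $\omega_c$ of $A$ are simple, so $A$ is simple with respect to $\omega_r$ and $\omega_c$; this supplies the first hypothesis of Theorem~\ref{thm:biembedding}.

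Next I would argue that $\omega_r$ and $\omega_c$ are also compatible, i.e. that $\omega_c\circ\omega_r$ is a single cycle of length $|\E(A)|=mn$. The crucial point, already observed in the discussion preceding the statement, is that compatibility is a condition on the permutation $\omega_c\circ\omega_r$ of the cells and is completely insensitive to the numerical entries of the array: it depends only on the cell pattern and on the chosen orderings. Since $A$ is a full $m\times n$ array equipped with its natural orderings, its cell pattern is the same as that of the arrays treated in \cite{CDP}, so Theorem~3.3 of \cite{CDP} transfers verbatim and yields compatibility precisely under the hypothesis that $m$ and $n$ are not both odd.

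With $A$ simple and compatible I would then apply Theorem~\ref{thm:biembedding}. In the notation $\N\H(m,n;h,k)$ of that theorem we have here $h=n$ and $k=m$, so the ambient graph $K_{2nk+1}$ is exactly $K_{2mn+1}$, and the theorem produces a cellular biembedding of the circuit decompositions $\mathcal{C}(\mathcal{D}_{\omega_r^{-1}})$ and $\mathcal{C}(\mathcal{D}_{\omega_c})$ into an orientable surface. Finally I would read the face lengths off the two face types described in the proof of Theorem~\ref{thm:biembedding}: a face of type $F_1$ is generated by a column and has length $\lambda_c\,|\E(C)|=\lambda_c\,m$, a multiple of $m$, whereas a face of type $F_2$ is generated by a row and has length $\lambda_r\,|\E(R)|=\lambda_r\,n$, a multiple of $n$. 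Hence every face length is a multiple of $m$ or a multiple of $n$, which is exactly the claim.

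The step I expect to be the main obstacle is the bookkeeping in the second paragraph: one must ensure that the \emph{same} pair of orderings is simultaneously simple (provided by global simplicity) and compatible (provided by \cite{CDP}), and that importing the compatibility result of \cite{CDP}, originally stated for zero-sum Heffter arrays, into the non-zero sum setting is legitimate. This is exactly where the observation that compatibility ignores the entries does the work; once it is spelled out, the remaining identification of the face lengths with the row and column sizes is routine.
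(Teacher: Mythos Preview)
Your proposal is correct and follows exactly the route the paper intends: combine the globally simple $\N\H(m,n;n,m)$ from Theorem~\ref{thm:gsrectangular} with the compatibility supplied by Theorem~3.3 of \cite{CDP}, then apply Theorem~\ref{thm:biembedding}. In fact you spell out more than the paper does, which states the result merely ``as a consequence of Theorem~\ref{thm:gsrectangular}'' after observing that the compatibility results of \cite{CDP} are insensitive to the actual entries; your explicit identification of the $F_1$/$F_2$ face lengths with multiples of $m$ and $n$ and your remark that the \emph{same} (natural) orderings serve for both simplicity and compatibility are useful clarifications that the paper leaves implicit.
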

\begin{rem}\label{genus}
Let us assume that we are under the hypothesis of Theorem \ref{existenceBiemb1} and that $2nk+1$ is a prime. Then each face is a circuit of length $k(2nk+1)$ and hence we have $2n$ faces. It follows that the genus of the surface $\Sigma$ is:
$$g=\frac{2-(2nk+1)+(2nk+1)nk-2n}{2}.$$
Similarly, if we are under the hypothesis of Theorem \ref{existenceBiemb2} and $2mn+1$ is a prime, then we have $m+n$ faces. It follows that the genus of the surface $\Sigma$ is:
$$g=\frac{2-(2mn+1)+(2mn+1)mn-(m+n)}{2}.$$
\end{rem}
\section*{Acknowledgements}
The first and the third authors were partially supported by INdAM--GNSAGA.

\end{document}